\documentclass[reqno,10pt]{article}

\usepackage{a4wide}
\usepackage{color}
\usepackage{mathrsfs}
\usepackage{mathtools}
\usepackage{amsmath}
\usepackage{amsthm}
\usepackage{amssymb}
\usepackage{bbm}
\usepackage{tikz-cd}
\usepackage{esint}
\usepackage{nicefrac}
\numberwithin{equation}{section}
\usepackage[colorlinks,citecolor=blue,linkcolor=red]{hyperref}
\usepackage{longtable}
\usepackage{setspace}

\usepackage[latin1]{inputenc}

\newcommand{\N}{\mathbb{N}}
\newcommand{\R}{\mathbb{R}}
\newcommand{\sfd}{{\sf d}}
\renewcommand{\d}{{\mathrm d}}
\newcommand{\X}{{\rm X}}
\newcommand{\Y}{{\rm Y}}
\newcommand{\mm}{\mathfrak{m}}
\newcommand{\1}{\mathbbm 1}
\newcommand{\LIP}{{\rm LIP}}
\newcommand{\Lip}{{\rm Lip}}
\newcommand{\lip}{{\rm lip}}
\renewcommand{\div}{{\rm div}}

\newcommand{\Per}{{\rm Per}}
\newcommand{\dist}{{\rm dist}}
\newcommand{\diam}{{\rm diam}}

\newcommand{\fr}{\penalty-20\null\hfill\(\blacksquare\)}

\newtheorem{theorem}{Theorem}[section]

\newtheorem{lemma}[theorem]{Lemma}
\newtheorem{proposition}[theorem]{Proposition}
\newtheorem{definition}[theorem]{Definition}

\newtheorem{remark}[theorem]{Remark}

\linespread{1.15}
\setcounter{tocdepth}{2}

\title{A note on Laplacian bounds, deformation properties \\ and isoperimetric sets in metric measure spaces}
\author{Enrico Pasqualetto\footnote{\href{mailto:enrico.e.pasqualetto@jyu.fi}{enrico.e.pasqualetto@jyu.fi},
Department of Mathematics and Statistics, P.O.\ Box 35 (MaD), FI-40014 University of Jyv\"askyl\"a, Finland.}
\;and Tapio Rajala\footnote{\href{mailto:tapio.m.rajala@jyu.fi}{tapio.m.rajala@jyu.fi},
Department of Mathematics and Statistics, P.O.\ Box 35 (MaD), FI-40014 University of Jyv\"askyl\"a, Finland.}}
\begin{document}

\date{\today}
\maketitle

\begin{abstract}
In the setting of length PI spaces satisfying a suitable deformation property, it is known that each isoperimetric set
has an open representative. In this paper, we construct an example of a length PI space (without the deformation
property) where an isoperimetric set does not have any representative whose topological interior is non-empty.
Moreover, we provide a sufficient condition for the validity of the deformation property, consisting in an upper
Laplacian bound for the squared distance functions from a point. Our result applies to essentially non-branching
${\sf MCP}(K,N)$ spaces, thus in particular to essentially non-branching ${\sf CD}(K,N)$ spaces and to many
Carnot groups and sub-Riemannian manifolds. As a consequence, every isoperimetric set in an essentially non-branching
${\sf MCP}(K,N)$ space has an open representative, which is also bounded whenever a uniform lower bound on the volumes
of unit balls is assumed.
\end{abstract}

\noindent\textbf{MSC(2020).} Primary: 53C23, 49Q20. Secondary: 26B30, 49J40\\
\textbf{Keywords.} Isoperimetric set; deformation property; Laplacian comparison; measure contraction property; PI space
\section{Introduction}
\subsection{General overview}
Functions of bounded variation and sets of finite perimeter have been studied extensively even in the non-smooth
context of metric measure spaces, starting from \cite{Mir:03}. Whereas a solid and consistent theory of BV functions
is available on arbitrary (possibly infinite-dimensional) metric measure spaces \cite{Amb:DiMa:14}, finer results
hold on those metric measure spaces that are uniformly locally doubling and support a weak local \((1,1)\)-Poincar\'{e}
inequality, which we call \emph{PI spaces}; see
e.g.\ \cite{Amb:01,Amb:02,Kin:Kor:Lor:Sha:13,Kin:Kor:Sha:Tuo:14,Lah:20}.
A distinguished class of sets of finite perimeter is the one of \emph{isoperimetric sets}, i.e.\ of minimisers
of the perimeter under a volume constraint. In the series of papers 
\cite{Ant:Fog:Poz:24,Ant:Pas:Poz:22,Ant:Bru:Fog:Poz:22,Ant:Nar:Poz:22,Ant:Pas:Poz:Sem:22,Ant:Pas:Poz:Sem:24}
(see also the survey \cite{Poz:22}), the isoperimetric problem has been investigated in the framework of
\emph{\({\sf RCD}(K,N)\) spaces}, which are finite-dimensional infinitesimally Hilbertian metric measure spaces
satisfying lower synthetic Ricci curvature bounds in the sense of Lott--Sturm--Villani (see \cite{Amb:18} and
the references therein). Topological properties of isoperimetric sets in non-collapsed \({\sf RCD}(K,N)\) spaces
were a fundamental ingredient for further investigations of the isoperimetric problem.
\medskip

In \cite{Ant:Pas:Poz:Vio:23}, the first named author, together with G.\ Antonelli, M.\ Pozzetta and I.\ Y.\ Violo, studied
isoperimetric sets in the setting of length PI spaces satisfying the so-called \emph{deformation property}. The latter
is a condition that prescribes a control on the increment of perimeters under perturbations with balls, see
\cite[Definition 3.3]{Ant:Pas:Poz:Vio:23}. The main result of \cite{Ant:Pas:Poz:Vio:23} states that, under the above
assumptions, the essential interior of every isoperimetric set is topologically open. Several other properties, such
as boundary density estimates, were obtained as well. The deformation property was known to hold on Euclidean spaces
\cite{Gon:Mas:Tam:83}, Riemannian manifolds and \({\sf RCD}(K,N)\) spaces \cite[Theorem 1.1]{Ant:Pas:Poz:22};
cf.\ with \cite[Remark 3.4]{Ant:Pas:Poz:Vio:23}. The goal of the present paper is twofold:
\begin{itemize}
\item To show that no topological regularity can be expected without the deformation property.
\item To prove that the deformation property is satisfied in a much larger family of spaces.
\end{itemize}
\subsection{Statement of results}
Let \((\X,\sfd,\mm)\) be a metric measure space. We denote by \(\Per(E;\cdot)\) the \emph{perimeter measure}
of a set of finite perimeter \(E\subseteq\X\) and we write \(\Per(E)\coloneqq\Per(E;\X)\); see Definition \ref{def:fin_per}.
Let us consider the \emph{isoperimetric profile} \(I\colon(0,\mm(\X))\to[0,+\infty]\) of the space \((\X,\sfd,\mm)\),
which is given by
\[
I(v)\coloneqq\inf\big\{\Per(E)\;\big|\;E\subseteq\X\text{ of finite perimeter, }\mm(E)=v\big\}
\quad\text{ for every }0<v<\mm(\X).
\]
A set of finite perimeter \(E\subseteq\X\) with \(0<\mm(E)<+\infty\) is said to be an \emph{isoperimetric set}
provided it satisfies \(\Per(E)=I(\mm(E))\); cf.\ with Definition \ref{def:isoper_set}. In the case where
\((\X,\sfd,\mm)\) is a PI space, we know from \cite{Amb:02} that a distinguished \(\mm\)-a.e.\ representative
of a set of finite perimeter \(E\) is its
\[
\text{\emph{essential interior}}\quad E^{(1)}\coloneqq\bigg\{x\in\X\;\bigg|
\;\lim_{r\searrow 0}\frac{\mm(E\cap B(x,r))}{\mm(B(x,r))}=1\bigg\}.
\]
The perimeter measure \(\Per(E;\cdot)\) is concentrated on the \emph{essential boundary}
\(\partial^e E\coloneqq\X\setminus(E^{(1)}\cup E^{(0)})\) of \(E\), where \(E^{(0)}\coloneqq(\X\setminus E)^{(1)}\)
denotes the \emph{essential exterior} of \(E\). It is proved in \cite[Theorem 1.1]{Ant:Pas:Poz:Vio:23} that if
\((\X,\sfd,\mm)\) is a length PI space satisfying the \emph{deformation property} \cite[Definition 3.3]{Ant:Pas:Poz:Vio:23},
then the essential interior \(E^{(1)}\) of every isoperimetric set \(E\subseteq\X\) is topologically open.
Since length PI spaces without the deformation property do exist \cite[Remarks 3.5 and 3.6]{Ant:Pas:Poz:Vio:23},
a natural question arises: are the essential interiors of isoperimetric sets topologically open in every length PI space?
Cf.\ with \cite[Question 1.5]{Ant:Pas:Poz:Vio:23}. Our first main result provides a negative answer to this question.
\begin{theorem}\label{thm:main_1}
There exist a length PI space \((\X,\sfd,\mm)\) and an isoperimetric set \(E\subseteq\X\) such that every
\(\mm\)-a.e.\ representative of \(E\) has empty topological interior.
\end{theorem}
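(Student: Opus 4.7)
The plan is to construct $(\X,\sfd,\mm)$ as a length PI space obtained from the interval $M=[0,1]$ by attaching a countable family of thin ``hairs'' at a dense set of points, and to take as isoperimetric set a half-segment of $M$. Density of the attachments will force every open ball in $\X$ to have positive intersection with some hair lying outside $E$, which rules out any representative of $E$ with non-empty interior. Indeed, recall that a set of finite perimeter $E$ admits an $\mm$-a.e.\ representative with non-empty interior if and only if $\mm(B\setminus E)=0$ for some open ball $B\subseteq\X$; the whole construction is designed to violate this.

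Concretely, I would enumerate a dense subset $\{p_n\}_{n\in\N}$ of $(0,1)$ and fix summable sequences $\ell_n,m_n\searrow 0$. For each $n$ take $T_n$ to be an isometric copy of $[0,\ell_n]$ equipped with a measure $\mu_n$ of total mass $m_n$ concentrated near its free endpoint, and let $\X$ be the topological quotient of $M\sqcup\bigsqcup_n T_n$ obtained by identifying $0\in T_n$ with $p_n\in M$, endowed with the intrinsic length metric $\sfd$ and with $\mm\coloneqq\mathcal{L}^1|_M+\sum_n\mu_n$. Uniform local doubling follows from the rapid decay of $\ell_n,m_n$ and the fact that only finitely many hairs meet any ball of fixed radius. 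The weak local $(1,1)$-Poincar\'e inequality is delicate: each $T_n$ meets $M$ only at the pinch point $p_n$, so for the chaining arguments to go through one needs $\mu_n$ to be concentrated far from $p_n$, so that the mean oscillation of a Sobolev function across the junction can be controlled by its upper gradient on a short arc of $M$ around $p_n$. This PI verification is the step I expect to be the main obstacle, since the sizes of $\ell_n,m_n$ must be tuned against the density of attachments so that a uniform Poincar\'e constant survives at all scales.

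With $(\X,\sfd,\mm)$ in hand, set $E\coloneqq[0,\tfrac12]\subseteq M$. Its essential boundary in $\X$ reduces to the single point $\{\tfrac12\}$, so $\Per(E)$ is finite. To show $E$ is isoperimetric at volume $v=\tfrac12$, I would take any competitor $F\subseteq\X$ with $\mm(F)=\tfrac12$ and transport its tentacle mass back onto $M$: since each $T_n$ is connected to $M$ through a single pinch point, such a rearrangement can be performed without increasing the perimeter, by replacing $F\cap T_n$ with an equal-mass piece of $M$ adjacent to $p_n$ (this uses that the contribution of $\Per(F;T_n)$ dominates the unit-point cost of the replacement). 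The problem thereby reduces to the classical $1$D isoperimetric problem on $([0,1],\mathcal{L}^1)$, which is optimized by half-intervals, giving $\Per(F)\geq\Per(E)$.

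Finally, let $B\subseteq\X$ be an arbitrary open ball. If $B\subseteq T_n$ for some $n$, then $\mm(B\setminus E)=\mm(B)>0$ since $T_n\cap E=\emptyset$. Otherwise $B$ meets $M$ in a relatively open subset of positive $\mathcal{L}^1$-measure; by density of $\{p_n\}$, this subset contains some $p_n$ with $p_n>\tfrac12$, and hence a positive-measure portion of the corresponding hair $T_n$, which lies entirely in $\X\setminus E$. In either case $\mm(B\setminus E)>0$, so no $\mm$-a.e.\ representative of $E$ can contain $B$; as $B$ was arbitrary, every representative of $E$ has empty topological interior.
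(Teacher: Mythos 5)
Your construction takes a genuinely different route from the paper, which keeps the metric a smooth manifold (the flat torus) and perturbs only the measure by a weight bounded between two positive constants --- making the PI property automatic --- whereas you graft a dense family of hairs onto $[0,1]$. Setting aside the PI verification (which you flag as the main obstacle and leave open), there is a more basic flaw in the claim that $E=[0,\tfrac12]$ is isoperimetric, rooted in the assertion that $\partial^e E=\{\tfrac12\}$. Consider a junction $p_n\in(0,\tfrac12)$. In any connected PI space, the hair $T_n$, viewed as a set of finite perimeter, must satisfy $\Per(T_n)>0$ (a nonconstant indicator cannot have zero total variation); since $\partial^e T_n\subseteq\{p_n\}$, this forces the density $c_n$ of $\mu_n$ at the junction to be strictly positive. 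Consequently the $\mm$-density of $E$ at $p_n$ is $\tfrac{2}{2+c_n}<1$, so $p_n\in\partial^e E$ and $\Per(E;\{p_n\})=\min(c_n,2)>0$. Hence $\partial^e E$ contains every junction below $\tfrac12$ and $\Per(E)$ includes the sum $\sum_{p_n<1/2}\min(c_n,2)$, which invalidates your reduction to the one-dimensional problem.

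Worse, $E$ is in fact not isoperimetric. Pick any junction with $p_{n_0}<\tfrac12-m_{n_0}$ and set $E'\coloneqq[0,\tfrac12-m_{n_0}]\cup T_{n_0}$. Then $\mm(E')=\mm(E)$, the perimeter cost at the right endpoint is essentially unchanged, yet the cost $\min(c_{n_0},2)>0$ at $p_{n_0}$ (and the costs at any junctions lying in $(\tfrac12-m_{n_0},\tfrac12)$) disappear from $\Per(E')$, so $\Per(E')<\Per(E)$. Your tentacle-transport heuristic therefore goes in the wrong direction: perimeter is lowered by pulling hairs into $E$, not by pushing their mass back onto $M$. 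Attempting to fix this by including all hairs below $\tfrac12$ in $E$ produces a set containing a full ball around every non-junction point of $(0,\tfrac12)$, hence with non-empty interior, which defeats the purpose. These obstructions suggest the hairy-interval ansatz cannot be made to work; the paper avoids them precisely by keeping the geometry of $\X$ smooth and localising all the subtlety in the weight.
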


Section \ref{s:main_1} will be devoted to the proof of Theorem \ref{thm:main_1}. We point out that the latter theorem
shows that the topological regularity of isoperimetric sets can fail in a very strong sense when the deformation property
does not hold: it can happen that \emph{no} representative of an isoperimetric set contains not even one interior point.
Furthermore, as we will see in Theorem \ref{thm:example}, an example as in Theorem \ref{thm:main_1} can be required
to satisfy several additional assumptions. In fact, the example we will construct is a \(2\)-dimensional flat torus
\(\R^2\setminus\mathbb Z^2\) equipped with its geodesic distance and a weighted Hausdorff measure
\(\rho\mathcal H^2\), whose density \(\rho\) is bounded away from zero and infinity.
\medskip

Our second main result gives a sufficient condition for the validity of the deformation property, which applies
to a rather vast class of metric measure spaces of interest. Informally, we show that an upper Laplacian bound
on the squared distance function from a point implies a local deformation property. This result is valid on
metric measure spaces \((\X,\sfd,\mm)\) where the metric space \((\X,\sfd)\) is a length space and \emph{proper}
(i.e.\ bounded closed subsets are compact), but no PI assumption is needed. The \emph{Laplacian comparison}
is formulated in a purely variational way: letting
\[
\sfd_x\coloneqq\sfd(x,\cdot)\colon\X\to[0,+\infty)
\]
denote the \emph{distance function} from \(x\in\X\), we write ``\(\Delta\sfd_x^2\leq C\mm\) on \(B(x,R)\)'' for some \(R,C>0\) if
\[
-\int_{B(x,R)}{\rm D}^+f(\nabla\sfd_x^2)\,\d\mm\leq C\int_{B(x,R)}f\,\d\mm\quad
\begin{array}{ll}
\text{ whenever }f\colon\X\to[0,+\infty)\text{ is}\\
\text{ Lipschitz with bounded support,}
\end{array}
\]
where the auxiliary function \({\rm D}^+f(\nabla\sfd_x^2)\colon\X\to\R\) is defined as
\[
{\rm D}^+f(\nabla\sfd_x^2)(y)\coloneqq\inf_{\varepsilon>0}\frac{\lip(\sfd_x^2+\varepsilon f)^2(y)-\lip(\sfd_x^2)^2(y)}{2\varepsilon}
\quad\text{ for every }y\in\X;
\]
see Definitions \ref{def:Lapl_bound} and \ref{def:Df(nabla_g)}. Note that we are not defining an actual Laplacian,
but only some sort of upper Laplacian bound, and that no Hilbertianity assumptions are required in our definition.
With this said, we can now give the precise statement of our second main result.
\begin{theorem}\label{thm:main_2}
Let \((\X,\sfd,\mm)\) be a metric measure space with \((\X,\sfd)\) proper and length. Assume that \(\Delta\sfd_x^2\leq C\mm\) on
\(B(x,R)\) for some \(x\in\X\) and \(R,C>0\). Let \(E\subseteq\X\) be a set of finite perimeter such that \(\mm(E)<+\infty\). Then
\begin{equation}\label{eq:def_prop_1}
\Per(B(x,r);E)\leq C\frac{\mm(E\cap B(x,r))}{2r}+\Per(E;B(x,r))\quad\text{ for a.e.\ }r\in(0,R).
\end{equation}
If in addition \((\X,\sfd,\mm)\) is a PI space, then the following \emph{deformation property} holds:
\begin{equation}\label{eq:def_prop_2}
\Per(E\setminus B(x,r))\leq C\frac{\mm(E\cap B(x,r))}{2r}+\Per(E)\quad\text{ for every }r\in(0,R).
\end{equation}
\end{theorem}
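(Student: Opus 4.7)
My strategy is to transpose the classical Euclidean integration-by-parts derivation of the deformation property to the metric setting, with the Laplacian bound on \(\sfd_x^2\) playing the role of \(\Delta|\cdot-x|^2=2n\). In the Euclidean model, testing the identity \(\Delta\sfd_x^2=2n\) against \(\chi_E\eta_{r,\eps}(\sfd_x)\) (with \(\eta_{r,\eps}\) a piecewise linear cutoff approximating \(\chi_{[0,r]}\)) and integrating by parts yields a ``radial'' term that coarea-converges to \(2r\Per(B(x,r);E)\) as \(\eps\to 0\), plus a boundary term on \(\partial^e E\) bounded by \(2r\Per(E;B(x,r))\); dividing by \(2r\) gives \eqref{eq:def_prop_1} with \(C=2n\). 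I aim to reproduce this structure through the weak \({\rm D}^+\)-formalism.

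Fix \(r\in(0,R)\). Let \(\eta_{r,\eps}\colon[0,\infty)\to[0,1]\) be the piecewise linear cutoff equal to \(1\) on \([0,r]\) and to \(0\) on \([r+\eps,\infty)\), and let \(\{h_n\}\) be a sequence of non-negative Lipschitz functions with bounded support satisfying \(h_n\to\chi_E\) in \(L^1(\mm)\) and \(\lip(h_n)\,\mm\rightharpoonup\Per(E;\cdot)\) weakly as measures, which exists by the variational definition of the metric perimeter. The key technical step is to establish the sharp pointwise bound
\[
{\rm D}^+(\eta_{r,\eps}(\sfd_x)\,h_n)(\nabla\sfd_x^2)(y)\;\leq\;2\sfd_x(y)\,\eta'_{r,\eps}(\sfd_x(y))\,h_n(y)+2\sfd_x(y)\,\eta_{r,\eps}(\sfd_x(y))\,\lip(h_n)(y)
\]
for \(\mm\)-a.e.\ \(y\in B(x,R)\). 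Morally, this says that the ``radial part'' of \({\rm D}^+\) exactly detects the derivative \(\eta'\), while the ``transverse part'' is controlled by \(\lip(h_n)\). The proof uses the length hypothesis: the slope of \(\sfd_x^2\) at \(y\) is attained along the geodesic from \(x\), so for sufficiently small \(\eps'>0\) the slope of \(\sfd_x^2+\eps'\eta_{r,\eps}(\sfd_x)h_n\) is still attained along directions close to this geodesic, and expanding to first order in \(\eps'\) produces the ``radial + transverse'' decomposition above.

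Once this estimate is in hand, I apply the Laplacian bound, rearrange, and use \(\eta'_{r,\eps}=-\eps^{-1}\chi_{(r,r+\eps)}\) to obtain
\[
\frac{2}{\eps}\int_{B(x,r+\eps)\setminus B(x,r)}\sfd_x\,h_n\,\d\mm\;\leq\;C\int\eta_{r,\eps}(\sfd_x)\,h_n\,\d\mm+2\int\sfd_x\,\eta_{r,\eps}(\sfd_x)\,\lip(h_n)\,\d\mm.
\]
Passing to the limit \(n\to\infty\) using the \(L^1\) convergence of \(h_n\) and the weak convergence \(\lip(h_n)\,\mm\rightharpoonup\Per(E;\cdot)\), and then letting \(\eps\to 0^+\) and invoking the coarea formula for the \(1\)-Lipschitz function \(\sfd_x\) (which identifies the LHS with \(2r\,\Per(B(x,r);E)\) for a.e.\ \(r\)) together with dominated convergence on the RHS, I deduce
\[
2r\,\Per(B(x,r);E)\;\leq\;C\,\mm(E\cap B(x,r))+2\int_{B(x,r)}\sfd_x\,\d\Per(E;\cdot).
\]
Since \(\sfd_x\leq r\) on \(B(x,r)\), the last integral is at most \(2r\,\Per(E;B(x,r))\), yielding \eqref{eq:def_prop_1} upon dividing by \(2r\).

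For \eqref{eq:def_prop_2}, the PI assumption enters via the standard decomposition \(\Per(E\setminus B(x,r))=\Per(E;\X\setminus\overline{B(x,r)})+\Per(B(x,r);E^{(1)})\), valid for a.e.\ \(r\in(0,R)\) --- namely, those for which \(\Per(E;\partial^e B(x,r))=0\), a consequence of the coarea formula applied to \(\sfd_x\) on the finite measure \(\Per(E;\cdot)\). Combined with \eqref{eq:def_prop_1} and the trivial \(\Per(E;\X\setminus\overline{B(x,r)})+\Per(E;B(x,r))\leq\Per(E)\), this gives \eqref{eq:def_prop_2} for a.e.\ \(r\); the extension to every \(r\in(0,R)\) follows from the \(L^1_{\rm loc}\)-lower semicontinuity of the perimeter, applied along a sequence of ``good'' radii converging to \(r\). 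The principal obstacle I anticipate is the rigorous justification of the pointwise \({\rm D}^+\) estimate above: the non-smoothness of \(\sfd_x^2\) at \(x\) and along the cut locus, together with the need to refine the crude subadditivity bound on \(\lip(\sfd_x^2+\eps'\eta_{r,\eps}(\sfd_x)h_n)\) into a sharp ``radial + transverse'' decomposition, will likely require a careful selection of tangent directions and an a.e.-in-\(y\) argument exploiting the length structure to ensure that the slope of \(\sfd_x^2\) is indeed realised along geodesics emanating from \(x\).
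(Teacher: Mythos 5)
Your overall strategy is the same as the paper's: approximate $\1_E$ by Lipschitz $h_n$ with $\lip(h_n)\mm\rightharpoonup\Per(E;\cdot)$, multiply by a radial cutoff, test the Laplacian bound, and pass to the limit. The pointwise bound you isolate,
\[
{\rm D}^+\big(\eta_{r,\eps}(\sfd_x)h_n\big)(\nabla\sfd_x^2)\;\leq\;2\sfd_x\,\eta'_{r,\eps}(\sfd_x)\,h_n+2\sfd_x\,\eta_{r,\eps}(\sfd_x)\,\lip(h_n),
\]
is indeed correct and is exactly what the paper derives. However, you openly acknowledge that you do not have a proof of it, and the heuristic justification you propose (that for small $\eps'$ the slope of $\sfd_x^2+\eps'\eta h_n$ is still ``attained along directions close to the geodesic from $x$'', and a first-order expansion yields a radial--transverse decomposition) is not a viable route: the slope is a $\limsup$ of difference quotients and is not stable under small Lipschitz perturbations in that sense, and no ``selection of tangent directions'' in the length space makes this rigorous. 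This is a genuine gap, not a technicality.

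The paper fills precisely this gap, and does so in a way that is worth contrasting with your plan. It does not use the length structure at all for the calculus estimate. Instead it proves, in an arbitrary metric space, a \emph{chain rule} ${\rm D}^+f(\nabla g^2)=2g\,{\rm D}^+f(\nabla g)$ (Proposition \ref{prop:chain_Dg(nabla_g)}) and a \emph{Leibniz inequality} ${\rm D}^+(fh)(\nabla g)\leq f\,{\rm D}^+h(\nabla g)+h\,{\rm D}^+f(\nabla g)$ (Proposition \ref{prop:Leibniz_Dg(nabla_g)}) by purely algebraic manipulations of the slope (\eqref{eq:calc_rul_lip_1}--\eqref{eq:calc_rul_lip_3}, \eqref{eq:chain_lip}), together with locality and the identities ${\rm D}^\pm f(\nabla f)=\lip(f)^2$ and $|{\rm D}^\pm f(\nabla g)|\leq\lip(f)\lip(g)$ (Lemma \ref{lem:tech_results_Df(nabla_g)}). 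Combining these gives exactly your displayed inequality $\mm$-a.e., with no appeal to geodesics. The length hypothesis enters the argument only through the elementary fact that $\lip(\sfd_x)\equiv 1$ on a length space, which is used to identify ${\rm D}^-\sfd_x(\nabla\sfd_x)=1$ on the annulus and to control $|{\rm D}^+h_n(\nabla\sfd_x)|\leq\lip(h_n)$. So your description of the key estimate as fundamentally a ``length-space'' statement about directions is misleading; it is really a metric-space calculus identity.

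A second, more minor point concerns the order of limits. You propose sending $n\to\infty$ first and then $\eps\to 0$, identifying $\lim_{\eps\to 0}\frac{1}{\eps}\int_{B(x,r+\eps)\setminus B(x,r)\cap E}\sfd_x\,\d\mm$ with $r\,\Per(B(x,r);E)$ ``by the coarea formula''. That identification is not an identity but only a one-sided inequality (via $|{\bf D}\sfd_x|\leq\mm$ and Lebesgue differentiation of $t\mapsto\Per(B(x,t);E)$), which fortunately points the right way; you should state it as such. The paper avoids this issue by reversing the order: for a.e.\ fixed $r$ it first extracts a weak limit $\lip(g^r_{k_j})\mm\rightharpoonup\nu_r\geq\Per(B(x,r);\cdot)$ (Proposition \ref{prop:Minkowski_per}, Prokhorov, and lower semicontinuity), integrates the continuous $f_n$ against $\nu_r$, and only then sends $n\to\infty$ using $f_n\to\1_E$ in $L^1(\Per(B(x,r);\cdot))$ for a.e.\ $r$ (Lemma \ref{lem:tech_results_BV} v). Your closing paragraph on \eqref{eq:def_prop_2} (decomposing $\Per(E\setminus B(x,r))$, exploiting $\Per(E;S(x,r))=0$ for a.e.\ $r$, and upgrading to all $r$ by lower semicontinuity) is correct and matches the paper.
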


The proof of Theorem \ref{thm:main_2}, which we will give in Section \ref{s:main_2}, relies on
a calibration-type argument. Heuristically, the deformation property follows from the Laplacian bound
on \(\sfd_x^2\) by the \emph{divergence theorem}: assuming that the space under consideration is a
Riemannian manifold, we have that
\[\begin{split}
C\mm(E\cap B(x,r))&\geq\int_{E\cap B(x,r)}\Delta\sfd_x^2\,\d\mm=\int_{E\cap B(x,r)}\div\nabla\sfd_x^2\,\d\mm\\
&=\int\nu_{E\cap B(x,r)}\cdot\nabla\sfd_x^2\,\d\Per(E\cap B(x,r);\cdot)\\
&=2\int_{E^{(1)}}\sfd_x\,\nabla\sfd_x\cdot\nabla\sfd_x\,\d\Per(B(x,r);\cdot)+2\int_{B(x,r)}\sfd_x\,\nu_E\cdot\nabla\sfd_x\,\d\Per(E;\cdot)\\
&\geq 2r\,\Per(B(x,r);E^{(1)})-2r\,\Per(E;B(x,r))\quad\text{ for a.e.\ }r\in(0,R),
\end{split}\]
where \(\nu_{E\cap B(x,r)}\) and \(\nu_E\) denote the outer unit normals to \(E\cap B(x,r)\) and \(E\), respectively.
In \cite{Ant:Pas:Poz:22}, the above strategy has been generalised to the non-smooth setting of \({\sf RCD}(K,N)\) spaces,
relying on the very refined calculus that is available in that context \cite{Gig:18}. Two essential ingredients were
the Laplacian comparison for squared distance functions in \({\sf RCD}(K,N)\) spaces \cite[Theorem 5.14]{Gig:15} and
the \emph{Gauss--Green integration-by-parts formula} obtained in \cite[Theorem 2.4]{Bru:Pas:Sem:23}. The proof of
Theorem \ref{thm:main_2} we will present follows along the same lines as in \cite[Theorem 1.1]{Ant:Pas:Poz:22},
but -- due to various technical simplifications in the arguments -- the former is significantly shorter than the latter
and valid in much greater generality. In particular, no refined differential calculus is required.
\medskip

Finally, by combining Theorem \ref{thm:main_2} with results from \cite{Cav:Mon:20,Ant:Pas:Poz:Vio:23}, we obtain the following.
\begin{theorem}\label{thm:MCP}
Let \((\X,\sfd,\mm)\) be an essentially non-branching \({\sf MCP}(K,N)\) space, for some \(K\in\R\) and \(N\in(1,\infty)\).
Let \(E\subseteq\X\) be an isoperimetric set. Then for any given radius \(R>0\) there exists \(C=C(K,N,R)>0\) such that
\eqref{eq:def_prop_2} holds for every \(x\in\X\). In particular, the following hold:
\begin{itemize}
\item[\(\rm i)\)] The essential interior \(E^{(1)}\) and the essential exterior \(E^{(0)}\) of \(E\) are topologically open sets.
\item[\(\rm ii)\)] The essential boundary \(\partial^e E\) of \(E\) coincides with the topological boundary
\(\partial E^{(1)}\) of \(E^{(1)}\).
\item[\(\rm iii)\)] \textbf{Boundary density estimates.} Given any bounded set \(B\subseteq\X\), there exist two
constants \(\bar r=\bar r(E,B,K,N)>0\) and \(c=c(E,B,K,N)>1\) such that
\[
\frac{1}{c}\leq\frac{\mm(E\cap B(x,r))}{\mm(B(x,r))}\leq 1-\frac{1}{c},
\qquad\frac{1}{c}\leq\frac{r\,\Per(E;B(x,r))}{\mm(B(x,r))}\leq c
\]
hold for every \(x\in\partial^e E\cap B\) and \(r\in(0,\bar r)\).
\item[\(\rm iv)\)] Given any \(K\subseteq\X\) compact, there exist constants \(\Lambda,r_0>0\)
such that \(E\) is a \emph{\((\Lambda,r_0)\)-perimeter minimiser} on \(K\), meaning that
\(\Per(E;B(x,r))\leq\Per(F;B(x,r))+\Lambda\,\mm(E\Delta F)\) holds whenever \(F\subseteq\X\) is
a set of finite perimeter with \(E\Delta F\subseteq B(x,r)\) for some \(x\in K\) and \(r\in(0,r_0)\).
\item[\(\rm v)\)] Given any \(K\subseteq\X\) compact, there exist constants \(L,r_1>0\)
such that \(E\) is \emph{\((L,r_1)\)-quasi minimal} on \(K\), meaning that
\(\Per(E;B(x,r))\leq L\,\Per(F;B(x,r))\) holds whenever \(F\subseteq\X\) is a set of finite perimeter
with \(E\Delta F\subseteq B(x,r)\) for some \(x\in K\) and \(r\in(0,r_1)\).
\item[\(\rm vi)\)] \textbf{Boundedness.} Assuming in addition that \(\inf_{x\in\X}\mm(B(x,1))>0\),
the set \(E^{(1)}\) is bounded.
\end{itemize}
\end{theorem}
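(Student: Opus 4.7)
The plan is to reduce Theorem~\ref{thm:MCP} to Theorem~\ref{thm:main_2}, supplied with an appropriate Laplacian bound, and then invoke the regularity theory of \cite{Ant:Pas:Poz:Vio:23} for length PI spaces with the deformation property.

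First I would record the background: essentially non-branching ${\sf MCP}(K,N)$ spaces are proper (via Bishop--Gromov local doubling), length (in fact geodesic), and satisfy the PI assumptions (uniform local doubling plus a weak local $(1,1)$-Poincar\'e inequality). Hence they fit the hypotheses of both Theorem~\ref{thm:main_2} and the main results of \cite{Ant:Pas:Poz:Vio:23}, and the only content specific to the ${\sf MCP}$ setting that remains to be verified is the upper Laplacian bound on squared distance functions.

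The main step---and the one I expect to require most care---is to produce $\Delta\sfd_x^2\le C\,\mm$ on $B(x,R)$ in the purely variational sense of Definitions~\ref{def:Lapl_bound}--\ref{def:Df(nabla_g)}, with a constant $C=C(K,N,R)$ independent of the base point $x\in\X$. For this I would appeal to the Laplacian comparison for squared distance functions established by Cavalletti--Mondino in \cite{Cav:Mon:20} for essentially non-branching ${\sf MCP}(K,N)$ spaces, and reconcile their distributional inequality with the ${\rm D}^+f(\nabla\sfd_x^2)$-based formulation used in the present paper. The compatibility check can be carried out using the elementary estimate $\lip(\sfd_x^2+\varepsilon f)^2\le(\lip(\sfd_x^2)+\varepsilon\lip(f))^2$, which allows one to bound the difference quotient defining ${\rm D}^+f(\nabla\sfd_x^2)$ in terms of $\lip(\sfd_x^2)\lip(f)$ and thus relate the variational integral to the distributional pairing. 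This reconciliation is the main technical obstacle, since the variational Laplacian is constructed without reference to any underlying Sobolev calculus, so care is needed at cut loci and $\mm$-negligible singular sets of $\sfd_x$; I expect it to go through by a standard truncation and monotone approximation argument, together with the locality of $\lip$.

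Once the bound is in place, Theorem~\ref{thm:main_2} immediately gives the deformation property \eqref{eq:def_prop_2} with constant $C(K,N,R)$, uniformly in $x\in\X$. Items (i)--(v) then follow by directly quoting the corresponding statements in \cite{Ant:Pas:Poz:Vio:23}: openness of $E^{(1)}$ and $E^{(0)}$, the identification $\partial^e E=\partial E^{(1)}$, the two-sided density estimates, and the $(\Lambda,r_0)$-perimeter minimality and $(L,r_1)$-quasi-minimality properties of $E$ on any compact set, where $\Lambda,L,r_0,r_1$ are produced from the uniform deformation constant and the doubling/Poincar\'e constants of the space. For (vi) I would argue by contradiction: assuming $\inf_{x\in\X}\mm(B(x,1))>0$ and that $E^{(1)}$ is unbounded, one selects a sequence of pairwise disjoint balls $B(x_n,1)$ with $x_n\in E^{(1)}$ escaping to infinity; the lower density estimate of (iii) together with the uniform volume lower bound of unit balls forces $\mm(E\cap B(x_n,1))$ to be bounded below by a positive constant, contradicting $\mm(E)<+\infty$. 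This is the same scheme as in \cite{Ant:Pas:Poz:Vio:23}, and the uniformity of the constants coming from our Theorem~\ref{thm:main_2} is precisely what makes it applicable here.
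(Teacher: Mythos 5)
Your proposal follows essentially the same route as the paper: invoke the standard facts (proper, geodesic, locally doubling, Poincar\'e via Sturm and von Renesse), obtain $\Delta\sfd_x^2\le C(K,N,R)\,\mm$ from Cavalletti--Mondino, feed it into Theorem~\ref{thm:main_2}, and then quote the regularity machinery of \cite{Ant:Pas:Poz:Vio:23} for items (i)--(vi). One point worth correcting: you flag the ``reconciliation'' between the Cavalletti--Mondino distributional Laplacian and the variational ${\rm D}^+f(\nabla\sfd_x^2)$ formulation as the main technical obstacle, requiring truncation, monotone approximation, and care at cut loci. In fact this step is essentially trivial here: Definition~\ref{def:Df(nabla_g)} is taken verbatim from (and agrees with) \cite[Definition 2.4]{Cav:Mon:20}, and Remark~\ref{rmk:consist_Lapl} already observes that any measure upper bound on a Radon functional in $\boldsymbol\Delta g\llcorner_\Omega$ in the sense of \cite{Cav:Mon:20} immediately yields $\Delta g\le\mu$ in the sense of Definition~\ref{def:Lapl_bound}, so \cite[Corollary 4.17]{Cav:Mon:20} is quoted directly with no further work. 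The contradiction argument you outline for (vi) is also exactly what \cite[Corollary 1.3]{Ant:Pas:Poz:Vio:23} encapsulates, so the paper simply cites it rather than reproving it.
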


We recall that \emph{\({\sf MCP}(K,N)\) spaces} are PI metric measure spaces that satisfy the so-called
\emph{measure contraction property} (introduced in \cite{Oht:07} and \cite{Stu:06} independently), while
the \emph{essential non-branching} condition was introduced by the second named author and K.-T.\ Sturm
in \cite{Raj:Stu:12}. The class of essentially non-branching \({\sf MCP}(K,N)\) spaces includes smooth Finsler
manifolds whose Minkowski norms on the tangent spaces are strongly convex and whose Ricci tensor is bounded
from below \cite{Oht:Stu:14}, essentially non-branching \({\sf CD}(K,N)\) spaces \cite{Stu:06} (thus,
\({\sf RCD}(K,N)\) spaces), and several sub-Riemannian spaces, such as the Heisenberg groups \cite{Jui:09},
ideal Carnot groups \cite{Rif:13}, corank-1 Carnot groups \cite{Riz:16}, generalised H-type Carnot groups
\cite{Bar:Riz:18}, the Grushin plane and the Grushin half-plane \cite{Riz:18}, Sasakian foliations
\cite{Bau:Gro:Kuw:Tha:19}, two-step analytic sub-Riemannian structures and Lipschitz Carnot groups \cite{Bad:Rif:20}.
We will prove Theorem \ref{thm:MCP} in Section \ref{s:main_2}, by combining Laplacian comparison results
for distance functions on \({\sf MCP}(K,N)\) spaces \cite{Cav:Mon:20} with Theorem \ref{thm:main_2} and \cite{Ant:Pas:Poz:Vio:23}.
Theorem \ref{thm:MCP} vi) can be applied for example to \cite[Theorem 5.3]{Nob:Vio:24}, showing that the boundedness
assumption made on \(E\) therein can be removed.
\medskip

We conclude the introduction by pointing out that the isoperimetric problem on \({\sf MCP}(K,N)\) spaces has been
studied in \cite{Cav:Man:22}, where a sharp isoperimetric inequality on essentially non-branching \({\sf MCP}(0,N)\)
spaces with Euclidean volume growth was proved. The topological regularity of isoperimetric sets was
proved on Carnot groups \cite{Leo:Rig:03} and on a class of sub-Riemannian manifolds \cite{Gal:Rit:13}.
\subsection*{Acknowledgements}
The first named author was supported by the Research Council of Finland grant 362898.
\section{Preliminaries}
\subsection{Metric spaces and Lipschitz functions}
Let \((\X,\sfd)\) be a metric space. Given any \(x\in\X\) and \(r>0\),
we define the open ball, the closed ball and the sphere of center \(x\) and radius \(r\) as
\[\begin{split}
B(x,r)&\coloneqq\big\{y\in\X\;\big|\;\sfd(x,y)<r\big\},\\
\bar B(x,r)&\coloneqq\big\{y\in\X\;\big|\;\sfd(x,y)\leq r\big\},\\
S(x,r)&\coloneqq\big\{y\in\X\;\big|\;\sfd(x,y)=r\big\},
\end{split}\]
respectively. Note that \(\bar B(x,r)\) contains the topological closure of \(B(x,r)\), while
\(S(x,r)\) contains the topological boundary of \(B(x,r)\).
\medskip

We denote by \(\LIP_{loc}(\X)\) the set of all real-valued locally Lipschitz functions on \(\X\), i.e.\ of those functions
\(f\colon\X\to\R\) such that for any \(x\in\X\) there exists \(r_x>0\) for which \(f|_{B(x,r_x)}\) is Lipschitz.
We then denote by \(\LIP(\X)\) the set of all those functions \(f\in\LIP_{loc}(\X)\) that are globally Lipschitz, while
\(\LIP_{bs}(\X)\) stands for the set of all \(f\in\LIP(\X)\) whose support \({\rm spt}(f)\coloneqq{\rm cl}(\{f\neq 0\})\) is bounded.
Given any \(f\in\LIP_{loc}(\X)\), we define its \textbf{slope function} \(\lip(f)\colon\X\to[0,+\infty)\) as
\[
\lip(f)(x)\coloneqq\limsup_{y\to x}\frac{|f(x)-f(y)|}{\sfd(x,y)}\quad\text{ for every accumulation point }x\in\X
\]
and \(\lip(f)(x)\coloneqq 0\) for every isolated point \(x\in\X\). It can be readily checked that \(\lip(f)\) is Borel.
\medskip

Fix any \(f,g\in\LIP_{loc}(\X)\), \(\lambda\in\R\) and \(\phi\in\LIP(\R)\). One can easily verify that the functions
\(\lambda f\), \(f+g\), \(fg\) and \(\phi\circ f\) belong to \(\LIP_{loc}(\X)\), and that
\begin{subequations}\begin{align}
\label{eq:calc_rul_lip_1}
\lip(\lambda f)&=|\lambda|\lip(f),\\
\label{eq:calc_rul_lip_2}
\lip(f+g)&\leq\lip(f)+\lip(g),\\
\label{eq:calc_rul_lip_3}
\lip(fg)&\leq|f|\lip(g)+|g|\lip(f),\\
\label{eq:calc_rul_lip_4}
\lip(\phi\circ f)&\leq(\lip(\phi)\circ f)\lip(f).
\end{align}\end{subequations}
If in addition we assume that \(\phi\in C^1(\R)\), then we also have that
\begin{equation}\label{eq:chain_lip}
\lip(\phi\circ f)=(|\phi'|\circ f)\lip(f).
\end{equation}
Indeed, the inequality \(\lip(\phi\circ f)\leq(|\phi'|\circ f)\lip(f)\) follows from \eqref{eq:calc_rul_lip_4}, as \(|\phi'|=\lip(\phi)\).
If \(x\in\X\) is such that \(|\phi'|(f(x))=0\), then \(\lip(\phi\circ f)(x)=(|\phi'|\circ f)(x)\lip(f)(x)=0\). If \(|\phi'|(f(x))\neq 0\),
then we can find \(\varepsilon>0\) and \(\psi\in C^1(\R)\) such that \((\psi\circ\phi)(t)=t\) for every \(t\in(f(x)-\varepsilon,f(x)+\varepsilon)\), so that
\[
\lip(f)(x)=\lip(\psi\circ(\phi\circ f))(x)\overset{\eqref{eq:calc_rul_lip_4}}\leq\lip(\psi)((\phi\circ f)(x))\lip(\phi\circ f)(x)
=\frac{\lip(\phi\circ f)(x)}{|\phi'|(f(x))}.
\]
All in all, the identity in \eqref{eq:chain_lip} is proved.
\medskip

We now give the definition of an object that has a key role in our notion of Laplacian bound.
\begin{definition}\label{def:Df(nabla_g)}
Let \((\X,\sfd)\) be a metric space. Let \(f,g\in\LIP_{loc}(\X)\) be given. Then we define
\[\begin{split}
{\rm D}^+f(\nabla g)(x)&\coloneqq\inf_{\varepsilon>0}\frac{\lip(g+\varepsilon f)^2(x)-\lip(g)^2(x)}{2\varepsilon}\quad\text{ for every }x\in\X,\\
{\rm D}^-f(\nabla g)(x)&\coloneqq\sup_{\varepsilon<0}\frac{\lip(g+\varepsilon f)^2(x)-\lip(g)^2(x)}{2\varepsilon}\quad\text{ for every }x\in\X.
\end{split}\]
\end{definition}

The above definition is taken from \cite[Definition 2.1]{Mon:15}, which was in turn inspired by \cite[Definition 3.1]{Gig:15}, where Sobolev functions
and minimal weak upper gradients were used in place of Lipschitz functions and slope functions. In particular, the above Definition \ref{def:Df(nabla_g)}
agrees with \cite[Definition 2.4]{Cav:Mon:20}. In the next result, we collect several basic properties of the functions \({\rm D}^\pm f(\nabla g)\).
\begin{lemma}\label{lem:tech_results_Df(nabla_g)}
Let \((\X,\sfd)\) be a metric space. Let \(f,\tilde f,g\in\LIP_{loc}(\X)\) and \(0\leq\lambda\in\R\) be given. Then:
\begin{itemize}
\item[\(\rm i)\)] Given any \(x\in\X\), the function \(\R\ni\varepsilon\mapsto\lip(g+\varepsilon f)(x)\) is convex.
\item[\(\rm ii)\)] It holds that
\[
-\frac{\lip(g-f)^2}{2}\leq{\rm D}^-f(\nabla g)\leq{\rm D}^+f(\nabla g)\leq\frac{\lip(g+f)^2}{2}.
\]
In particular, the functions \({\rm D}^\pm f(\nabla g)\colon\X\to\R\) are locally bounded.
\item[\(\rm iii)\)] Given any \(x\in\X\), it holds that
\[\begin{split}
{\rm D}^+f(\nabla g)(x)&=\lim_{\varepsilon\searrow 0}\frac{\lip(g+\varepsilon f)^2(x)-\lip(g)^2(x)}{2\varepsilon},\\
{\rm D}^-f(\nabla g)(x)&=\lim_{\varepsilon\nearrow 0}\frac{\lip(g+\varepsilon f)^2(x)-\lip(g)^2(x)}{2\varepsilon}.
\end{split}\]
In particular, the functions \({\rm D}^\pm f(\nabla g)\colon\X\to\R\) are Borel.
\item[\(\rm iv)\)] \({\rm D}^+(\lambda f)(\nabla g)=\lambda\,{\rm D}^+f(\nabla g)\) and \({\rm D}^+(-\lambda f)(\nabla g)=-\lambda\,{\rm D}^-f(\nabla g)\).
\item[\(\rm v)\)] \({\rm D}^\pm f(\nabla f)=\lip(f)^2\), \({\rm D}^+\1_\X(\nabla g)=0\) and \({\rm D}^+(f+\1_\X)(\nabla g)={\rm D}^+f(\nabla g)\).
\item[\(\rm vi)\)] \(|{\rm D}^\pm f(\nabla g)|\leq\lip(f)\lip(g)\).
\item[\(\rm vii)\)] \({\rm D}^+f(\nabla g)={\rm D}^+\tilde f(\nabla g)\) in the interior of \(\{f=\tilde f\}\).
\end{itemize}
\end{lemma}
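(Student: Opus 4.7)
The whole lemma is organised around part (i), so the plan is to establish convexity first and then read off the remaining assertions as standard consequences of convex analysis together with direct calculations from the definition of the slope.

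For (i), I would argue pointwise. Fix $x\in\X$ and, for each $y\neq x$, consider the function $h_y(\varepsilon)\coloneqq|g(x)-g(y)+\varepsilon(f(x)-f(y))|/\sfd(x,y)$. Each $h_y$ is convex in $\varepsilon$ as the absolute value of an affine function. For fixed $\delta>0$ the supremum $H_\delta(\varepsilon)\coloneqq\sup_{0<\sfd(x,y)<\delta}h_y(\varepsilon)$ is convex (as a supremum of convex functions), and the family $\{H_\delta\}_{\delta>0}$ is nonincreasing as $\delta\searrow 0$. Hence $\varepsilon\mapsto\lip(g+\varepsilon f)(x)=\inf_{\delta>0}H_\delta(\varepsilon)$ is a decreasing pointwise limit of convex functions, which is convex. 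Squaring then yields the convexity of $\psi(\varepsilon)\coloneqq\lip(g+\varepsilon f)^2(x)$, because the composition of a nonnegative convex function with the convex nondecreasing map $t\mapsto t^2$ is convex.

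Parts (ii) and (iii) are now immediate from the convex analysis of $\psi$: the forward difference quotient $\varepsilon\mapsto(\psi(\varepsilon)-\psi(0))/(2\varepsilon)$ is nondecreasing on $\R\setminus\{0\}$, so $\inf_{\varepsilon>0}$ in the definition of ${\rm D}^+ f(\nabla g)(x)$ coincides with the right derivative $\psi'(0^+)/2$ and $\sup_{\varepsilon<0}$ in the definition of ${\rm D}^- f(\nabla g)(x)$ coincides with the left derivative $\psi'(0^-)/2$; the convex inequality $\psi'(0^-)\leq\psi'(0^+)$ gives ${\rm D}^-f(\nabla g)\leq{\rm D}^+ f(\nabla g)$, and evaluating the difference quotients at $\varepsilon=\pm 1$ gives the two outer bounds. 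Local boundedness then follows from the local boundedness of the slope functions involved, and Borel measurability follows because ${\rm D}^\pm f(\nabla g)$ are monotone limits of Borel functions (the slope $\lip(h)$ being Borel for every locally Lipschitz $h$).

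The remaining items are essentially computational and I expect no real obstacle. For (iv), in $\inf_{\varepsilon>0}$ I substitute $\delta=\lambda\varepsilon$ (resp.\ $\delta=-\lambda\varepsilon$) to convert the infimum over $\varepsilon>0$ into an infimum over $\delta>0$ (resp.\ a supremum over $\delta<0$ with an overall sign $-\lambda$), recovering $\lambda\,{\rm D}^+ f(\nabla g)$ and $-\lambda\,{\rm D}^- f(\nabla g)$ respectively; the case $\lambda=0$ is trivial. For (v), I note $\lip(f+\varepsilon f)(x)=|1+\varepsilon|\lip(f)(x)$, so $\psi(\varepsilon)=(1+\varepsilon)^2\lip(f)^2(x)$ near $0$ and both one-sided derivatives equal $2\lip(f)^2(x)$; the two statements involving $\1_\X$ follow from the fact that adding a constant function to $g$ or to $f$ does not affect the slope. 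For (vi), I use $|\lip(g+\varepsilon f)-\lip(g)|\leq|\varepsilon|\lip(f)$ coming from \eqref{eq:calc_rul_lip_1}-\eqref{eq:calc_rul_lip_2}, multiply by $\lip(g+\varepsilon f)+\lip(g)\leq 2\lip(g)+|\varepsilon|\lip(f)$ and divide by $2|\varepsilon|$, obtaining $|(\psi(\varepsilon)-\psi(0))/(2\varepsilon)|\leq\lip(f)\lip(g)+|\varepsilon|\lip(f)^2/2$; letting $\varepsilon\to 0$ gives the bound. Finally, for (vii), the slope at a point depends only on the values of the function in an arbitrarily small neighbourhood, so at any $x$ in the interior of $\{f=\tilde f\}$ we have $\lip(g+\varepsilon f)(x)=\lip(g+\varepsilon\tilde f)(x)$ for every $\varepsilon$, and the two infima coincide.
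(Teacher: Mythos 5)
Your proposal is correct and follows the same overall structure as the paper: establish convexity of $\varepsilon\mapsto\lip(g+\varepsilon f)(x)$ and its square $\psi$, read (ii)--(iii) off standard convex analysis (monotonicity of difference quotients, one-sided derivatives), treat (iv), (v), (vii) by direct substitution, and prove (vi) by the elementary factor-and-estimate bound $|(\psi(\varepsilon)-\psi(0))/(2\varepsilon)|\leq\lip(f)\lip(g)+|\varepsilon|\lip(f)^2/2$, which is exactly the paper's computation.

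The one place you diverge is (i). The paper proves convexity in one line from the calculus rules \eqref{eq:calc_rul_lip_1}--\eqref{eq:calc_rul_lip_2}: for $t\in[0,1]$,
\[
\lip\big(g+(t\varepsilon_1+(1-t)\varepsilon_2)f\big)=\lip\big(t(g+\varepsilon_1 f)+(1-t)(g+\varepsilon_2 f)\big)\leq t\,\lip(g+\varepsilon_1 f)+(1-t)\,\lip(g+\varepsilon_2 f).
\]
You instead unwind the $\limsup$ defining the slope and present $\lip(g+\varepsilon f)(x)$ as a decreasing pointwise limit $\inf_{\delta>0}H_\delta(\varepsilon)$ of suprema $H_\delta$ of absolute values of affine functions in $\varepsilon$. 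This is also correct --- suprema of convex functions are convex, and a finite pointwise limit of convex functions is convex --- but it is longer and requires you to note separately that the claim is trivial at isolated points (where $\lip\equiv 0$) and that for $\delta$ small $H_\delta$ is finite because $f,g$ are locally Lipschitz. The paper's route via subadditivity and positive homogeneity of $\lip$ handles all cases at once and is the cleaner argument, though there is nothing wrong with yours.

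One small phrasing point: when you write that $\{H_\delta\}_{\delta>0}$ is ``nonincreasing as $\delta\searrow 0$,'' you mean that $\delta\mapsto H_\delta(\varepsilon)$ is nondecreasing, so that the family decreases pointwise as $\delta$ shrinks; the conclusion you draw is correct, but the sentence as written could be misread.
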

\begin{proof}
\ \\
{\bf i).} It follows from \eqref{eq:calc_rul_lip_1} and \eqref{eq:calc_rul_lip_2}.\\
{\bf ii), iii).} They are direct consequences of i).\\
{\bf iv), v).} They easily follow from the definition.\\
{\bf vi).} Fix any \(\varepsilon\in\R\) with \(\varepsilon>0\). Applying \eqref{eq:calc_rul_lip_1}
and \eqref{eq:calc_rul_lip_2}, we can estimate
\[\begin{split}
\frac{\lip(g+\varepsilon f)^2-\lip(g)^2}{2\varepsilon}&\leq\frac{(\lip(g)+\varepsilon\,\lip(f))^2-\lip(g)^2}{2\varepsilon}
=\lip(f)\lip(g)+\frac{\varepsilon}{2}\lip(f)^2,\\
\frac{\lip(g-\varepsilon f)^2-\lip(g)^2}{-2\varepsilon}&\geq\frac{(\lip(g)+\varepsilon\,\lip(f))^2-\lip(g)^2}{-2\varepsilon}
=-\lip(f)\lip(g)-\frac{\varepsilon}{2}\lip(f)^2.
\end{split}\]
By letting \(\varepsilon\searrow 0\) in the above, we obtain \({\rm D}^+f(\nabla g)\leq\lip(f)\lip(g)\) and
\({\rm D}^-f(\nabla g)\geq-\lip(f)\lip(g)\), respectively. Taking also ii) into account, we conclude that
\(|{\rm D}^\pm f(\nabla g)|\leq\lip(f)\lip(g)\).\\
{\bf vii).} Just note that \(\lip(g+\varepsilon f)=\lip(g+\varepsilon\tilde f)\) in the interior of \(\{f=\tilde f\}\).
\end{proof} 

In the proof of Theorem \ref{thm:main_2}, we will also need the calculus rules for \({\rm D}^+f(\nabla g)\) that we are going
to state and prove below. We point out that lookalike formulas were obtained in \cite[Propositions 3.15 and 3.17]{Gig:15}.
However, in \cite{Gig:15} the functions \({\rm D}^+f(\nabla g)\) are defined in a different way with respect to our paper.
It also seems that the proof strategies for \cite[Propositions 3.15 and 3.17]{Gig:15} cannot be adapted to our setting, thus
we will provide new arguments in proving Propositions \ref{prop:chain_Dg(nabla_g)} and \ref{prop:Leibniz_Dg(nabla_g)}.
\begin{proposition}\label{prop:chain_Dg(nabla_g)}
Let \((\X,\sfd)\) be a metric space. Let \(f,g\in\LIP_{loc}(\X)\) with \(g\geq 0\) be given. Then
\[
{\rm D}^+f(\nabla g^2)=2g\,{\rm D}^+f(\nabla g).
\]
\end{proposition}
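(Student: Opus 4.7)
The plan is to reduce, at each point $x_0 \in \X$, the slope $\lip(g^2 + \varepsilon f)(x_0)$ to that of a function linear in $g$, namely $2g(x_0)\,g + \varepsilon f$, and then use the scaling rule \eqref{eq:calc_rul_lip_1} to perform a change of variables in the incremental ratio defining ${\rm D}^+ f(\nabla g^2)(x_0)$. The identity is trivial at isolated points, so one may assume $x_0$ is an accumulation point.

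The key step is the pointwise identity
\[
\lip(g^2 + \varepsilon f)(x_0) = \lip\bigl(2g(x_0)\,g + \varepsilon f\bigr)(x_0) \qquad\text{for every }\varepsilon \in \R.
\]
To prove it, I would use the algebraic decomposition $g(y)^2 - g(x_0)^2 = 2g(x_0)(g(y) - g(x_0)) + (g(y) - g(x_0))^2$ to obtain
\[
(g^2 + \varepsilon f)(x_0) - (g^2 + \varepsilon f)(y) = \bigl[(2g(x_0)\,g + \varepsilon f)(x_0) - (2g(x_0)\,g + \varepsilon f)(y)\bigr] - (g(y) - g(x_0))^2,
\]
divide by $\sfd(x_0, y)$, and observe that the quadratic remainder is bounded by $L\,|g(y) - g(x_0)|$, which tends to $0$ as $y \to x_0$, where $L$ is a local Lipschitz constant of $g$ near $x_0$. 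A triangle inequality applied under $\limsup$ then yields the identity.

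Now suppose $g(x_0) > 0$. Setting $\delta := \varepsilon/(2g(x_0))$ and applying \eqref{eq:calc_rul_lip_1} to the right-hand side of the key identity, one finds after squaring that $\lip(g^2 + \varepsilon f)^2(x_0) = 4g(x_0)^2\,\lip(g + \delta f)^2(x_0)$. Substituting into the defining quotient gives
\[
\frac{\lip(g^2 + \varepsilon f)^2(x_0) - \lip(g^2)^2(x_0)}{2\varepsilon} = 2g(x_0) \cdot \frac{\lip(g + \delta f)^2(x_0) - \lip(g)^2(x_0)}{2\delta},
\]
and since $g(x_0) > 0$ one has $\delta \searrow 0$ as $\varepsilon \searrow 0$, so the right-hand side tends to $2g(x_0){\rm D}^+ f(\nabla g)(x_0)$ by Lemma \ref{lem:tech_results_Df(nabla_g)} iii). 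At a point with $g(x_0) = 0$, both sides vanish: the right-hand side trivially, and the left-hand side because \eqref{eq:chain_lip} gives $\lip(g^2)(x_0) = 0$ while sublinearity of the slope yields $\lip(g^2 + \varepsilon f)(x_0) \leq \varepsilon\,\lip(f)(x_0)$, trapping the incremental ratio in $[0, \varepsilon\,\lip(f)^2(x_0)/2]$.

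I expect the only real obstacle to be the algebraic identity above: one must be sure that the quadratic remainder is genuinely negligible under $\limsup$, whereas the rest is bookkeeping. Note that the hypothesis $g \geq 0$ is used in the nondegenerate case to ensure that $\delta$ and $\varepsilon$ have the same sign, so that the limit picks out ${\rm D}^+$ rather than ${\rm D}^-$.
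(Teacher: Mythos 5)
Your argument is correct, and it lands in the same conceptual place as the paper's proof --- replacing $g^2$ by its first-order expansion at $x_0$ before applying the scaling rule --- but the bookkeeping is genuinely different and, if anything, a bit more direct. You prove the pointwise identity $\lip(g^2+\varepsilon f)(x_0)=\lip\bigl(2g(x_0)\,g+\varepsilon f\bigr)(x_0)$ outright, using the elementary decomposition $g(y)^2-g(x_0)^2=2g(x_0)(g(y)-g(x_0))+(g(y)-g(x_0))^2$ and the fact that the quadratic remainder is $o(\sfd(x_0,y))$ by local Lipschitzness of $g$; from there only the homogeneity rule \eqref{eq:calc_rul_lip_1} is needed. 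The paper instead introduces the auxiliary function $\bigl(g+\frac{\varepsilon(f-f(x_0))}{2g(x_0)}\bigr)^2$, expands the square, kills the cross and quadratic terms via the Leibniz rule \eqref{eq:calc_rul_lip_3}, and then invokes the chain rule \eqref{eq:chain_lip} for $\phi(t)=t^2$ to descend from $\lip(h^2)$ to $\lip(h)$. Your route avoids \eqref{eq:chain_lip} entirely in the nondegenerate case, and your key identity also handles $g(x_0)=0$ uniformly (set $\varepsilon=0$ to get $\lip(g^2)(x_0)=0$, and the identity itself gives $\lip(g^2+\varepsilon f)(x_0)=\varepsilon\lip(f)(x_0)$), so your separate treatment of that case is redundant but harmless. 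One small omission worth making explicit: when substituting into the difference quotient you implicitly use $\lip(g^2)(x_0)=2g(x_0)\lip(g)(x_0)$; this follows from your key identity at $\varepsilon=0$ together with \eqref{eq:calc_rul_lip_1}, but it should be stated rather than absorbed silently.
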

\begin{proof}
Fix any \(x\in\X\). First of all, assume that \(g(x)>0\). Note that for any \(\varepsilon>0\) it holds that
\[
\bigg(g+\frac{\varepsilon(f-f(x))}{2 g(x)}\bigg)^2=(g^2+\varepsilon f)+\underset{\eqqcolon a_{\varepsilon,x}}{\underbrace{\varepsilon\frac{(f-f(x))(g-g(x))}{g(x)}}}
+\underset{\eqqcolon b_{\varepsilon,x}}{\underbrace{\frac{\varepsilon^2(f-f(x))^2}{4 g(x)^2}}}-\varepsilon f(x).
\]
Given that
\[\begin{split}
\lip(a_{\varepsilon,x})(x)&\overset{\eqref{eq:calc_rul_lip_1}}=\frac{\varepsilon}{g(x)}\lip\big((f-f(x))(g-g(x))\big)(x)\\
&\overset{\eqref{eq:calc_rul_lip_3}}\leq\frac{\varepsilon}{g(x)}\big(|f(x)-f(x)|\lip(g)(x)+|g(x)-g(x)|\lip(f)(x)\big)=0,\\
\lip(b_{\varepsilon,x})(x)&\overset{\eqref{eq:calc_rul_lip_1}}=\frac{\varepsilon^2}{4g(x)^2}\lip\big((f-f(x))^2\big)(x)
\overset{\eqref{eq:calc_rul_lip_3}}\leq\frac{\varepsilon^2}{2g(x)^2}|f(x)-f(x)|\lip(f)(x)=0,
\end{split}\]
we deduce that \(\lip(a_{\varepsilon,x})(x)=\lip(b_{\varepsilon,x})(x)=0\), thus accordingly
\[
\lip\bigg(\bigg(g+\frac{\varepsilon(f-f(x))}{2 g(x)}\bigg)^2\bigg)(x)=\lip(g^2+\varepsilon f)(x)\quad\text{ for every }\varepsilon>0.
\]
Therefore, we can compute
\[\begin{split}
{\rm D}^+f(\nabla g^2)(x)&\overset{\phantom{\eqref{eq:chain_lip}}}=\lim_{\varepsilon\searrow 0}\frac{\lip(g^2+\varepsilon f)^2(x)-\lip(g^2)^2(x)}{2\varepsilon}\\
&\overset{\phantom{\eqref{eq:chain_lip}}}=\lim_{\varepsilon\searrow 0}\frac{\lip\big(\big(g+\frac{\varepsilon(f-f(x))}{2 g(x)}\big)^2\big)^2(x)-\lip(g^2)^2(x)}{2\varepsilon}\\
&\overset{\eqref{eq:chain_lip}}=\lim_{\varepsilon\searrow 0}\frac{4g(x)^2\lip\big(g+\frac{\varepsilon(f-f(x))}{2 g(x)}\big)^2(x)-4g(x)^2\lip(g)^2(x)}{2\varepsilon}\\
&\overset{\phantom{\eqref{eq:chain_lip}}}=2g(x)\lim_{\varepsilon\searrow 0}\frac{\lip\big(g+\frac{\varepsilon f}{2 g(x)}\big)^2(x)-\lip(g)^2(x)}{2\frac{\varepsilon}{2g(x)}}
=2g(x)\,{\rm D}^+f(\nabla g)(x),
\end{split}\]
which gives the statement in the case where \(g(x)>0\). Finally, assume that \(g(x)=0\). Then
\[\begin{split}
\big|\lip(g^2+\varepsilon f)(x)-\varepsilon\,\lip(f)(x)\big|&\overset{\eqref{eq:calc_rul_lip_1}}=\big|\lip(g^2+\varepsilon f)(x)-\lip(\varepsilon f)(x)\big|\\
&\overset{\eqref{eq:calc_rul_lip_2}}\leq\lip(g^2)(x)\overset{\eqref{eq:calc_rul_lip_3}}\leq 2g(x)\lip(g)(x)=0,
\end{split}\]
whence it follows that \(\lip(g^2)(x)=0\) and \(\lip(g^2+\varepsilon f)(x)=\varepsilon\,\lip(f)(x)\), thus accordingly
\[
{\rm D}^+f(\nabla g^2)(x)=\lim_{\varepsilon\searrow 0}\frac{\lip(g^2+\varepsilon f)^2(x)-\lip(g^2)^2(x)}{2\varepsilon}=\lim_{\varepsilon\searrow 0}\frac{\varepsilon\,\lip(f)^2(x)}{2}
=0=2g(x)\,{\rm D}^+f(\nabla g)(x).
\]
All in all, the statement is proved.
\end{proof}
\begin{proposition}\label{prop:Leibniz_Dg(nabla_g)}
Let \((\X,\sfd)\) be a metric space. Fix any \(f,g,h\in\LIP_{loc}(\X)\) with \(f,h\geq 0\). Then
\[
{\rm D}^+(fh)(\nabla g)\leq f\,{\rm D}^+h(\nabla g)+h\,{\rm D}^+f(\nabla g).
\]
\end{proposition}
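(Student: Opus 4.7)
The plan is to fix $x\in\X$, reduce the perturbation $\varepsilon fh$ to its ``linearization at $x$'', namely $\varepsilon(f(x)h+h(x)f)$, and then separate the two resulting contributions via a convexity trick. Setting $a\coloneqq f(x)\geq 0$ and $b\coloneqq h(x)\geq 0$, the starting observation is the algebraic identity $fh=ah+bf-ab+(f-a)(h-b)$. The constant $ab$ does not affect the slope, while \eqref{eq:calc_rul_lip_3} gives
\[
\lip\bigl((f-a)(h-b)\bigr)(x)\leq|f(x)-a|\,\lip(h-b)(x)+|h(x)-b|\,\lip(f-a)(x)=0,
\]
and hence, by \eqref{eq:calc_rul_lip_2}, $\lip(g+\varepsilon fh)(x)=\lip(g+\varepsilon(ah+bf))(x)$ for every $\varepsilon>0$.

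The key step is then to write $g+\varepsilon(ah+bf)=\tfrac{1}{2}(g+2\varepsilon ah)+\tfrac{1}{2}(g+2\varepsilon bf)$. Combining \eqref{eq:calc_rul_lip_1} and \eqref{eq:calc_rul_lip_2} with the elementary inequality $\bigl(\tfrac{p+q}{2}\bigr)^2\leq\tfrac{p^2+q^2}{2}$ valid for $p,q\geq 0$, I would obtain
\[
\lip\bigl(g+\varepsilon(ah+bf)\bigr)^2(x)\leq\tfrac{1}{2}\lip(g+2\varepsilon ah)^2(x)+\tfrac{1}{2}\lip(g+2\varepsilon bf)^2(x).
\]
Subtracting $\lip(g)^2(x)$, dividing by $2\varepsilon$ and introducing $\varepsilon'\coloneqq 2\varepsilon$, this rearranges into
\[
\frac{\lip(g+\varepsilon fh)^2(x)-\lip(g)^2(x)}{2\varepsilon}\leq\frac{\lip(g+\varepsilon'ah)^2(x)-\lip(g)^2(x)}{2\varepsilon'}+\frac{\lip(g+\varepsilon'bf)^2(x)-\lip(g)^2(x)}{2\varepsilon'}.
\]

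Finally I would let $\varepsilon\searrow 0$ (equivalently $\varepsilon'\searrow 0$): by Lemma \ref{lem:tech_results_Df(nabla_g)} iii) each term has a limit, yielding ${\rm D}^+(fh)(\nabla g)(x)\leq{\rm D}^+(ah)(\nabla g)(x)+{\rm D}^+(bf)(\nabla g)(x)$. Since $a,b\geq 0$, Lemma \ref{lem:tech_results_Df(nabla_g)} iv) turns the right-hand side into $f(x)\,{\rm D}^+h(\nabla g)(x)+h(x)\,{\rm D}^+f(\nabla g)(x)$, which is the claim.

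The main conceptual obstacle is that the slope is not additive, so one cannot simply split $\lip(g+\varepsilon ah+\varepsilon bf)$ into separate contributions from $\varepsilon ah$ and $\varepsilon bf$. The convex combination with weight $\tfrac{1}{2}$, combined with the convexity of $t\mapsto t^2$ on $[0,\infty)$, is precisely what cleanly separates the two perturbations while preserving the correct first-order coefficient after the substitution $\varepsilon'=2\varepsilon$. The hypothesis $f,h\geq 0$ enters only at the very end, through the scalar rule in Lemma \ref{lem:tech_results_Df(nabla_g)} iv), which is available only for nonnegative multipliers.
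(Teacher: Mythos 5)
Your proof is correct and uses the same core strategy as the paper: reduce the perturbation to its ``linearization'' $\varepsilon(f(x)h+h(x)f)$ via the identity $fh=f(x)h+h(x)f-f(x)h(x)+(f-f(x))(h-h(x))$ and the observation that $\lip\bigl((f-f(x))(h-h(x))\bigr)(x)=0$, then split the two perturbations with a convex-combination trick and the convexity of $t\mapsto t^2$. The one (minor but pleasant) difference is in the convex combination: the paper writes $g+\varepsilon f(x)h+\varepsilon h(x)f=\tfrac{f(x)}{s(x)}\bigl(g+\varepsilon s(x)h\bigr)+\tfrac{h(x)}{s(x)}\bigl(g+\varepsilon s(x)f\bigr)$ with $s(x)=f(x)+h(x)$, which produces the coefficients $f(x)$, $h(x)$ directly but forces a separate treatment of the degenerate case $s(x)=0$; you instead use the balanced split $\tfrac12(g+2\varepsilon f(x)h)+\tfrac12(g+2\varepsilon h(x)f)$ followed by the rescaling $\varepsilon'=2\varepsilon$ and Lemma \ref{lem:tech_results_Df(nabla_g)} iv) to pull out the factors $f(x),h(x)\geq 0$ afterwards, which handles $f(x)=h(x)=0$ uniformly (since ${\rm D}^+(0)(\nabla g)=0$) and avoids the case split.
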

\begin{proof}
Fix \(x\in\X\). First of all, assume that \(s(x)\coloneqq f(x)+h(x)>0\). For any \(\varepsilon>0\), it holds that
\[
g+\varepsilon f(x)h+\varepsilon h(x)f=(g+\varepsilon fh)-\underset{\eqqcolon a_{\varepsilon,x}}{\underbrace{\varepsilon(f-f(x))(h-h(x))}}+\varepsilon f(x) h(x).
\]
As \(\lip(a_{\varepsilon,x})(x)=0\) by \eqref{eq:calc_rul_lip_1} and \eqref{eq:calc_rul_lip_3}, we get \(\lip(g+\varepsilon f(x)h+\varepsilon h(x)f)(x)=\lip(g+\varepsilon fh)(x)\).
Moreover, we have that
\[\begin{split}
\lip(g+\varepsilon f(x)h+\varepsilon h(x)f)(x)&=\lip\bigg(\frac{f(x)}{s(x)}(g+\varepsilon s(x)h)+\frac{h(x)}{s(x)}(g+\varepsilon s(x)f)\bigg)(x)\\
&\leq\frac{f(x)}{s(x)}\lip(g+\varepsilon s(x)h)(x)+\frac{h(x)}{s(x)}\lip(g+\varepsilon s(x)f)(x)
\end{split}\]
by \eqref{eq:calc_rul_lip_1} and \eqref{eq:calc_rul_lip_2}. Since the function \(\R\ni t\mapsto t^2\) is convex, we deduce that
\[\begin{split}
\lip(g+\varepsilon f(x)h+\varepsilon h(x)f)^2(x)&\leq\bigg(\frac{f(x)}{s(x)}\lip(g+\varepsilon s(x)h)(x)+\frac{h(x)}{s(x)}\lip(g+\varepsilon s(x)f)(x)\bigg)^2\\
&\leq\frac{f(x)}{s(x)}\lip(g+\varepsilon s(x)h)^2(x)+\frac{h(x)}{s(x)}\lip(g+\varepsilon s(x)f)^2(x).
\end{split}\]
Therefore, we can compute
\[\begin{split}
&{\rm D}^+(fh)(\nabla g)(x)=\lim_{\varepsilon\searrow 0}\frac{\lip(g+\varepsilon f(x)h+\varepsilon h(x)f)^2(x)-\lip(g)^2(x)}{2\varepsilon}\\
\leq\,&f(x)\lim_{\varepsilon\searrow 0}\frac{\lip(g+\varepsilon s(x)h)^2(x)-\lip(g)^2(x)}{2\varepsilon s(x)}
+h(x)\lim_{\varepsilon\searrow 0}\frac{\lip(g+\varepsilon s(x)f)^2(x)-\lip(g)^2(x)}{2\varepsilon s(x)}\\
=\,&f(x)\,{\rm D}^+h(\nabla g)(x)+h(x)\,{\rm D}^+f(\nabla g)(x).
\end{split}\]
Finally, assume that \(s(x)=0\), so that \(f(x)=h(x)=0\). Then
\[\begin{split}
|\lip(g+\varepsilon fh)(x)-\lip(g)(x)|&\overset{\eqref{eq:calc_rul_lip_2}}\leq\lip(\varepsilon fh)(x)\overset{\eqref{eq:calc_rul_lip_1}}=\varepsilon\,\lip(fh)(x)\\
&\overset{\eqref{eq:calc_rul_lip_3}}\leq\varepsilon|f(x)|\lip(h)(x)+\varepsilon|h(x)|\lip(f)(x)=0,
\end{split}\]
whence it follows that \({\rm D}^+(fh)(\nabla g)(x)=0=f(x)\,{\rm D}^+h(\nabla g)(x)+h(x)\,{\rm D}^+f(\nabla g)(x)\).
\end{proof}
\subsection{Metric measure spaces and upper Laplacian bounds}
In this paper, by a \textbf{metric measure space} \((\X,\sfd,\mm)\) we mean a complete and separable metric space
\((\X,\sfd)\) equipped with a Borel measure \(\mm\geq 0\) that is finite on all bounded Borel sets. Given finite non-negative Borel measures
\((\mu_n)_{n\in\N}\) and \(\mu\) on \(\X\), we say that \(\mu_n\) \textbf{weakly converges} to \(\mu\), and we write \(\mu_n\rightharpoonup\mu\),
provided it holds that
\[
\int f\,\d\mu=\lim_{n\to\infty}\int f\,\d\mu_n\quad\text{ for every bounded continuous function }f\colon\X\to\R.
\]
Let us now introduce our notion of upper Laplacian bound.
\begin{definition}[Upper Laplacian bound]\label{def:Lapl_bound}
Let \((\X,\sfd,\mm)\) be a metric measure space. Let \(\Omega\subseteq\X\) be an open set and \(\mu\geq 0\) a Borel measure on \(\X\). Let \(g\in\LIP(\Omega)\)
be given. Then we write
\[
\Delta g\leq\mu\quad\text{ on }\Omega
\]
provided it holds that
\begin{equation}\label{eq:def_Delta}
-\int_\Omega{\rm D}^+f(\nabla g)\,\d\mm\leq\int_\Omega f\,\d\mu\quad\text{ for every }f\in\LIP_{bs}(\Omega)\text{ with }f\geq 0.
\end{equation}
\end{definition}

The support of \({\rm D}^+f(\nabla g)\) is contained in \({\rm spt}(f)\), so that \(|{\rm D}^+f(\nabla g)|\leq\1_{{\rm spt}(f)}(\Lip(g)^2+\Lip(f)^2)\)
holds on \(\Omega\) by Lemma \ref{lem:tech_results_Df(nabla_g)} ii), and thus the integral in the left-hand side of \eqref{eq:def_Delta} is well defined.
\begin{remark}[Consistency with \cite{Cav:Mon:20}]\label{rmk:consist_Lapl}{\rm
Assume \((\X,\sfd)\) is proper. Let \(g\in\LIP(\X)\) be in the domain \(D(\boldsymbol\Delta,\Omega)\) of the Laplacian of \(\Omega\)
and let \(T\) be a Radon functional over \(\Omega\) belonging to \(\boldsymbol\Delta g\llcorner_\Omega\) (in the sense of \cite[Definition 2.12]{Cav:Mon:20}).
Assume also that a Borel measure \(\mu\geq 0\) on \(\X\) satisfies \(T(h)\leq\int h\,\d\mu\) for all \(h\in\LIP_{bs}(\X)^+\). It is then clear that
\(\Delta g\leq\mu\) on \(\Omega\) (in the sense of Definition \ref{def:Lapl_bound}).
\fr}\end{remark}
\subsection{Sets of finite perimeter}
The ensuing discussion on BV functions and sets of finite perimeter is taken from \cite{Mir:03} (and \cite{Amb:DiMa:14}).
\begin{definition}[Function of bounded variation]\label{def:BV}
Let \((\X,\sfd,\mm)\) be a metric measure space. Given any function \(f\in L^1_{loc}(\mm)\) and any open set
\(\Omega\subseteq\X\), we define the \textbf{total variation} of \(f\) on \(\Omega\) as
\[
|{\bf D}f|(\Omega)\coloneqq\inf\bigg\{\liminf_{n\to\infty}\int_\Omega\lip(f_n)\,\d\mm\;\bigg|\;
(f_n)_n\subseteq\LIP_{loc}(\Omega),\,f_n\to f\text{ in }L^1_{loc}(\mm|_\Omega)\bigg\}.
\]
Then we say that \(f\) is a \textbf{function of bounded variation} provided \(|{\bf D}f|(\X)<+\infty\).
\end{definition}

When \(f\in L^1_{loc}(\mm)\) is a function of bounded variation, the set function \(\Omega\mapsto|{\bf D}f|(\Omega)\)
introduced in Definition \ref{def:BV} can be uniquely extended to a finite Borel measure, which we still denote by \(|{\bf D}f|\).
It can be readily checked that every \(f\in\LIP_{bs}(\X)\) is a function of bounded variation with
\begin{equation}\label{eq:ineq_Lip_BV}
|{\bf D}f|\leq\lip(f)\mm.
\end{equation}
Furthermore, it follows from the equivalence results in \cite{Amb:DiMa:14} that for any function \(f\in L^1(\mm)\) of bounded
variation it holds that
\begin{equation}\label{eq:def_BV_improved}
|{\bf D}f|(\X)=\inf\bigg\{\liminf_{n\to\infty}\int\lip(f_n)\,\d\mm\;\bigg|\;
(f_n)_n\subseteq\LIP_{bs}(\X),\,f_n\to f\text{ in }L^1(\mm)\bigg\}.
\end{equation}
\begin{definition}[Set of finite perimeter]\label{def:fin_per}
Let \((\X,\sfd,\mm)\) be a metric measure space. Let \(E\subseteq\X\) be Borel. Then we say that \(E\)
is a \textbf{set of finite perimeter} if \(\1_E\) is a function of bounded variation. We call
\(\Per(E;\cdot)\coloneqq|{\bf D}\1_E|\) its \textbf{perimeter measure} and \(\Per(E)\coloneqq\Per(E;\X)\) its \textbf{perimeter}.
\end{definition}

We collect below some basic properties of perimeter measures:
\begin{itemize}
\item \textbf{Complementation.} If \(E\subseteq\X\) is a set of finite perimeter, then \(\X\setminus E\) is a set of finite perimeter as well
and it holds that \(\Per(\X\setminus E;\cdot)=\Per(E;\cdot)\).
\item \textbf{Invariance under a.e.\ modifications.} If \(E\subseteq\X\) is a set of finite perimeter and \(F\subseteq\X\) is a Borel set satisfying
\(\mm(E\Delta F)=0\), then \(F\) is a set of finite perimeter as well and it holds that \(\Per(F;\cdot)=\Per(E;\cdot)\).
\item \textbf{Lower semicontinuity of perimeters.} Let \((E_n)_{n\in\N}\) be a sequence of sets of finite perimeter in \(\X\) such that
\(\sup_{n\in\N}\Per(E_n)<+\infty\). Let \(E\subseteq\X\) be a Borel set. Assume that for any \(x\in\X\) there exists \(r_x>0\) such that
\(\mm((E_n\Delta E)\cap B(x,r_x))\to 0\) as \(n\to\infty\). Then \(E\) is a set of finite perimeter and \(\Per(E;\Omega)\leq\liminf_n\Per(E_n;\Omega)\)
for every open set \(\Omega\subseteq\X\).
\end{itemize}

The following Fleming--Rishel coarea formula for metric measure spaces was proved in \cite{Mir:03}.
\begin{theorem}[Coarea formula]
Let \((\X,\sfd,\mm)\) be a metric measure space. Let \(f\in L^1_{loc}(\mm)\) be a function of bounded variation
and \(g\colon\X\to[0,+\infty)\) a Borel function. Then \(\{f<t\}\) is a set of finite perimeter for
a.e.\ \(t\in\R\), the a.e.\ defined function \(\R\ni t\mapsto\int g\,\d\Per(\{f<t\};\cdot)\) has a Borel
representative, and it holds that
\begin{equation}\label{eq:coarea}
\int g\,\d|{\bf D}f|=\int_{-\infty}^{+\infty}\!\!\!\int g\,\d\Per(\{f<t\};\cdot)\,\d t.
\end{equation}
\end{theorem}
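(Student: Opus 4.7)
The plan is to prove the measure-level identity \(|{\bf D}f|=\int_{-\infty}^{+\infty}\Per(\{f<t\};\cdot)\,\d t\) first and then recover the weighted statement for a generic non-negative Borel \(g\) by a monotone-class approximation. Both directions of the measure identity are proved by reducing to Lipschitz functions via the relaxation formulas in Definition \ref{def:BV} and \eqref{eq:def_BV_improved}.

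For the \emph{Lipschitz coarea} inequality, the starting point is that for every \(\varphi\in\LIP_{loc}(\X)\) and every open set \(\Omega\subseteq\X\) one has
\[
\int_\Omega\lip(\varphi)\,\d\mm\geq\int_{-\infty}^{+\infty}\Per(\{\varphi<s\};\Omega)\,\d s,
\]
which I would obtain by approximating \(\1_{\{\varphi<s\}}\) by the Lipschitz cut-offs \(\min\{1,(s-\varphi)_+/\varepsilon\}\), computing slopes via the chain rule \eqref{eq:chain_lip}, integrating in \(s\), and applying Fubini. Given a generic \(f\in L^1_{loc}(\mm)\) of bounded variation, I select an approximating sequence \((f_n)\subseteq\LIP_{loc}(\X)\) realising \(|{\bf D}f|(\X)\); after extracting a subsequence, \(\1_{\{f_n<t\}}\to\1_{\{f<t\}}\) in \(L^1_{loc}(\mm)\) for almost every \(t\), so the lower semicontinuity of perimeters combined with Fatou's lemma yields \(|{\bf D}f|(\Omega)\geq\int_{-\infty}^{+\infty}\Per(\{f<t\};\Omega)\,\d t\) for every open \(\Omega\). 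Outer regularity then promotes this to the inequality \(|{\bf D}f|\geq\int \Per(\{f<t\};\cdot)\,\d t\) between Borel measures.

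For the reverse inequality, I would use the layer-cake identity \(f=\int_{-\infty}^{+\infty}(\1_{\{f>t\}}-\1_{\{t<0\}})\,\d t\) and approximate \(f\) by the step functions \(f_N=2^{-N}\sum_{k\in\mathbb{Z}}\1_{\{f>k 2^{-N}\}}\), truncated so as to stay in \(L^1(\mm)\). Subadditivity of total variation together with the equality \(|{\bf D}\1_E|=\Per(E;\cdot)\) yields \(|{\bf D}f_N|\leq 2^{-N}\sum_k\Per(\{f>k 2^{-N}\};\cdot)\); passing to the limit via the \(L^1\)-lower semicontinuity of total variation on the left and recognising the right-hand side as Riemann sums converging to \(\int\Per(\{f>t\};\cdot)\,\d t\) produces the reverse measure inequality. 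Combining the two directions gives the measure identity, and then integrating against \(g\) (first against indicators of open sets, then against general non-negative Borel functions by monotone approximation) yields \eqref{eq:coarea}; the Borel measurability of \(t\mapsto\int g\,\d\Per(\{f<t\};\cdot)\) drops out along the way by checking it first on a countable family of test functions and extending by a monotone-class argument. The main obstacle is the Lipschitz coarea inequality: since in a generic metric measure space \(\lip(\varphi)\) need not coincide with any canonical upper gradient of \(\varphi\), it is crucial to stay at the level of the relaxation \eqref{eq:def_BV_improved} of total variation via Lipschitz approximations, rather than invoke Sobolev-type machinery that would require PI-type hypotheses not assumed here.
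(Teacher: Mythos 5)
Note first that the paper does not prove this theorem: it is quoted directly from \cite{Mir:03}, so there is no in-paper argument to compare against. Your two-step strategy — establishing the measure identity $|{\bf D}f|=\int_{-\infty}^{+\infty}\Per(\{f<t\};\cdot)\,\d t$ by proving the two inequalities separately through Lipschitz relaxation and then integrating against $g$ — is the standard route and is essentially what Miranda does; the key step you identify (the Lipschitz coarea inequality via cut-offs of the form $\min\{1,(s-\varphi)_+/\varepsilon\}$, \eqref{eq:calc_rul_lip_4} and Fubini) is the right one, and your remark that one must work with the Lipschitz relaxation of \eqref{eq:def_BV_improved} rather than with a Sobolev-type upper gradient is also the correct caution.

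There is, however, one step in the lower bound that does not hold as written. You select a single sequence $(f_n)\subseteq\LIP_{loc}(\X)$ realising $|{\bf D}f|(\X)$ and then claim that lower semicontinuity of perimeter plus Fatou gives $|{\bf D}f|(\Omega)\geq\int_{-\infty}^{+\infty}\Per(\{f<t\};\Omega)\,\d t$ for \emph{every} open $\Omega$. The chain you build does deliver $\int_{-\infty}^{+\infty}\Per(\{f<t\};\Omega)\,\d t\leq\liminf_n\int_\Omega\lip(f_n)\,\d\mm$, but the right-hand side is related to $|{\bf D}f|(\Omega)$ in the \emph{wrong} direction: lower semicontinuity of total variation gives $\liminf_n\int_\Omega\lip(f_n)\,\d\mm\geq|{\bf D}f|(\Omega)$, and the weak convergence $\lip(f_n)\mm\rightharpoonup|{\bf D}f|$ of Lemma \ref{lem:tech_results_BV} iv) upgrades this to an equality on $\Omega$ only when $|{\bf D}f|(\partial\Omega)=0$. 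So as stated the argument only lands the inequality for $\Omega=\X$. This is easily repaired: either apply the Lipschitz coarea inequality with a fresh optimal sequence for each fixed open $\Omega$ (Definition \ref{def:BV} furnishes one for each $\Omega$) and then conclude by outer regularity of the two Borel measures; or record the Lipschitz coarea inequality as a measure inequality $\lip(f_n)\mm\geq\nu_n\coloneqq\int\Per(\{f_n<t\};\cdot)\,\d t$, pass to a weak subsequential limit $\nu_\infty$ of $(\nu_n)_n$ (tight, being dominated by the tight sequence $\lip(f_n)\mm$), deduce $\nu_\infty\leq|{\bf D}f|$, and check $\nu_\infty(\Omega)\geq\int\Per(\{f<t\};\Omega)\,\d t$ for open $\Omega$ exactly as you did. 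In the upper bound, the Riemann-sum step likewise needs the standard grid-shift averaging, since $t\mapsto\Per(\{f>t\};\Omega)$ is only Lebesgue integrable and its Riemann sums on the unshifted dyadic grid need not converge; once that is in place, the remainder of your sketch (dyadic layer-cake, subadditivity of total variation, lower semicontinuity, then monotone-class extension from indicators of open sets to general non-negative Borel $g$) goes through.
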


Finally, let us recall the concept of isoperimetric set in a metric measure space.
\begin{definition}[Isoperimetric set]\label{def:isoper_set}
Let \((\X,\sfd,\mm)\) be a metric measure space. Let \(E\subseteq\X\) be a set of finite perimeter with \(0<\mm(E)<+\infty\).
Then we say that \(E\) is an \textbf{isoperimetric set} provided
\[
\Per(E)=\inf\Big\{\Per(F)\;\Big|\;F\subseteq\X\text{ of finite perimeter, }\mm(F)=\mm(E)\Big\}.
\]
\end{definition}
\section{Proof of Theorem \ref{thm:main_1}}\label{s:main_1}
In this section, we prove that Theorem \ref{thm:main_1} holds through an explicit construction of an example.
\medskip

We take as the metric space the \(2\)-dimensional flat torus $\X \coloneqq \mathbb T = \mathbb R^2/ \mathbb Z^2$,
equipped with its geodesic distance $\sfd$. To define the reference measure, we first define a sequence
$(x_i)_{i=1}^\infty \subseteq\X$. We start by fixing an arbitrary $x_1 \in \X$. Then we define the rest of the sequence
of points $x_i$ iteratively as follows. Suppose $(x_i)_{i=1}^k$ have been selected. Now, let $x_{k+1}$ be one of
the points maximising the distance to $\bigcup_{i=1}^k B(x_i,2^{-i-2})$ in $\X$. Let us abbreviate
$r_i\coloneqq 2^{-i-2}$, $B_i\coloneqq B(x_i,r_i)$ and define $U \coloneqq \bigcup_{i=1}^\infty B_i$.
We define the reference measure $\mm$ in $\X$ to be  $\rho\mathcal H^2$ with $\rho = c \coloneqq 2^{-10}\pi^{-2}$
on $U$ and $\rho \coloneqq 1$ on $\X \setminus U$. Finally, we denote $V \coloneqq \X \setminus U$.
\begin{theorem}\label{thm:example}
The metric measure space \((\X,\sfd,\mm)\) as above is an infinitesimally Hilbertian, Ahlfors $2$-regular PI space
with \((\X,\sfd)\) compact and geodesic. Moreover, $V\subseteq\X$ is an isoperimetric set with the property that
any \(\mm\)-a.e.\ representative \(\tilde V\) of \(V\) has empty topological interior.
\end{theorem}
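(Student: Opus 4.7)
The argument splits into three stages: the structural regularity of \((\X,\sfd,\mm)\), the topology of \(V\), and the isoperimetric claim.

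\emph{Stage 1 (regularity).} The underlying metric space \((\X,\sfd)\) is compact and geodesic as the standard flat $2$-torus. Since \(c\leq\rho\leq 1\), the measure \(\mm\) is comparable to \(\mathcal H^2\) up to bounded constants, so Ahlfors \(2\)-regularity passes from \(\mathcal H^2\) to \(\mm\), yielding the doubling property. The weak \((1,1)\)-Poincar\'e inequality for \((\X,\sfd,\mathcal H^2)\) transfers to \((\X,\sfd,\mm)\) by a direct comparison of integrals and of the averages \(f_B\), with constants depending only on \(1/c\). Infinitesimal Hilbertianity is immediate because the Cheeger energy admits the explicit quadratic form \(f\mapsto\int|\nabla f|^2\rho\,d\mathcal H^2\) on \(W^{1,2}(\X,\sfd,\mm)\).

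\emph{Stage 2 (topology of $V$).} Let \(D_k\coloneqq\sup_{x\in\X}\dist(x,\bigcup_{j\leq k}B_j)\). If \(D_k\geq\delta>0\) held for every \(k\), then since \(x_{k+1}\) realises the supremum we would get \(\sfd(x_{k+1},x_j)\geq D_k+r_j\geq\delta\) for every \(j\leq k\), producing an infinite \(\delta\)-separated sequence in the compact space \(\X\), a contradiction. So \(D_k\searrow 0\), \(U\) is dense, and \(V\) has empty topological interior. Because \(\rho\geq c>0\), every \(\mm\)-null set is \(\mathcal H^2\)-null; any \(\mm\)-a.e.\ representative \(\tilde V\) therefore differs from \(V\) by an \(\mathcal H^2\)-null set. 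A hypothetical nonempty open \(W\subseteq\tilde V\) would have \(W\cap U\subseteq\tilde V\setminus V\) open and \(\mathcal H^2\)-null, hence empty; so \(W\subseteq V\), contradicting the emptiness of \(\operatorname{int}(V)\).

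\emph{Stage 3 (isoperimetry).} The upper bound \(\Per(V)\leq 2\pi c\sum_i r_i=c\pi/2\) is produced by Lipschitz approximants of \(\mathbbm 1_V\) that transition within thin annuli strictly inside each \(B_i\), where the slope picks up only the weight \(c\). For a competitor \(F\) with \(\mm(F)=\mm(V)\), set \(a\coloneqq\mathcal H^2(F\cap U)\); mass conservation gives \(\mathcal H^2(F^c\cap V)=ca\) and \(\mathcal H^2(F^c\cap U)=\mathcal H^2(U)-a\). I would decompose \(\Per_\mm(F^c)\) into pieces living in \(\operatorname{int}(V)\) (effective weight \(1\)), in \(\operatorname{int}(U)\) (weight \(c\)), and on \(\partial U\) (effective weight \(c\)). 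The Euclidean isoperimetric inequality in the torus bounds the \(V\)-piece below by roughly \(2\sqrt{\pi ca}\), while the \(U\)-piece and the \(\partial U\)-piece together dominate \(c\) times the Euclidean perimeter of \(U\setminus F^c\). A case analysis exploiting the identity \(\sum_i r_i^2=\tfrac13(\sum_i r_i)^2\) combined with the smallness of \(c=2^{-10}\pi^{-2}\) then shows that any \(a>0\) strictly increases the total perimeter, so the infimum is \(c\pi/2\), attained only when \(F=V\) up to a null set.

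The main obstacle is the interface analysis on \(\partial U\): because \(\rho\) jumps from \(c\) inside to \(1\) outside, the trace density governing \(\Per_\mm\) on \(\partial U\) depends on the local configuration of \(F^c\) on both sides of the boundary. I plan to handle this by approximating \(F^c\) in BV by sets with smooth classical boundary, identifying the interface weight as \(\min(c,1)=c\) via lower semicontinuity of the perimeter, and passing to the limit using the coarea formula.
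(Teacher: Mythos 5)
Stages 1 and 2 of your proposal are essentially correct and parallel the paper's arguments. Your Stage~2 shows $D_k\searrow 0$ via compactness; the paper proves density of $U$ in Lemma~\ref{lma:density_separation}~i) by the same mechanism (a nonempty open subset of $V$ would force an $\varepsilon$-separated infinite sequence $(x_i)$). Your extra paragraph about $\mm$-a.e.\ representatives is a necessary and correct addition. For Stage~1, the infinitesimal Hilbertianity claim is not quite as ``immediate'' as you suggest (one must identify the minimal weak upper gradient with $|\nabla f|$ under the weight); the paper cites \cite{LucPasRaj21} for exactly this, but this is a minor point.

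Stage~3, however, has a genuine gap and is not a proof. You explicitly write ``I would decompose\ldots'' and ``I plan to handle this by\ldots'', which signals the hard part is deferred; that hard part is precisely where the content of the theorem lies. Two concrete issues. First, the appeal to the Euclidean isoperimetric inequality to bound ``the $V$-piece below by roughly $2\sqrt{\pi c a}$'' does not account for the portion of $\partial^e F^c$ that lies on $\partial U$: a competitor can shape $F^c\cap V$ as a thin annular layer around the $B_i$, so that almost all of its Euclidean boundary sits on $\partial U$ at the cheap weight $c$, and the remaining perimeter in the interior of $V$ has no a priori lower bound in terms of $\mathcal H^2(F^c\cap V)$. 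The paper defuses exactly this danger using the separation estimate $\dist(x_k,\bigcup_{i<k}B_i)\geq 2r_k$ (Lemma~\ref{lma:density_separation}~ii)), which feeds into Lemma~\ref{lma:2} to produce the crucial lower bound $\Per(F;\X\setminus U_c)\geq\tfrac{1}{4\pi c}\Per(F;U_c)$ for each indecomposable component $F$. Nothing in your sketch plays the role of this separation property; without it the argument fails on the configurations just described. Second, you never address the rearrangement step inside $U$: the paper constructs the auxiliary function $\varphi$ and proves $\Per(E\cap U)\geq\varphi(\mm(E\cap U))$ (Lemma~\ref{lma:1}) by iteratively replacing $E\cap U$ with concentric balls; this is what makes the final chain of inequalities close. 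Finally, the identity $\sum_i r_i^2=\tfrac13(\sum_i r_i)^2$ that you propose to exploit, while numerically true, plays no role in the actual mechanism of the proof; the relevant numerology is instead the smallness of $c$ as used in Lemmas~\ref{lma:phi} and~\ref{lma:2} and in the final string of estimates in the proof of Theorem~\ref{thm:example}.
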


We recall that `infinitesimally Hilbertian' means that the \(2\)-Sobolev space \(W^{1,2}(\X,\sfd,\mm)\) is a Hilbert space \cite{Gig:15}.
The verification of Theorem \ref{thm:example} will be subdivided into several steps. Let us begin with an observation on the construction.
\begin{lemma}\label{lma:density_separation}
The following properties hold:
\begin{itemize}
    \item[\(\rm i)\)] The set $U$ is dense in $V$.
    \item[\(\rm ii)\)] For any $k \ge 2$, we have that
    \[
    \dist\left(x_k,\bigcup_{i=1}^{k-1} B_i\right) \ge 2r_k.
    \]
\end{itemize}
\end{lemma}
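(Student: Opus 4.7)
My plan is to prove \(\rm ii)\) first, since it is the quantitative heart of the lemma, and then deduce \(\rm i)\) by a compactness argument.

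For \(\rm ii)\), I would use a simple packing/volume estimate on the torus. The open \(2r_k\)-neighborhood \(N_k\) of \(\bigcup_{i=1}^{k-1}B_i\) in \(\mathbb T\) is contained in \(\bigcup_{i=1}^{k-1}\bar B(x_i,r_i+2r_k)\). Because \(r_j=2^{-j-2}\), each of the enlarged radii is at most \(r_1+2r_2 = 1/4\), safely below the injectivity radius \(1/2\) of the flat torus, so each such closed ball is isometric to a Euclidean disk and has \(\mathcal H^2\)-measure equal to \(\pi(r_i+2r_k)^2\). The numerical fact that drives everything is \(2r_k=r_{k-1}\le r_i\) for every \(i\le k-1\), which gives \(r_i+2r_k\le 2r_i\). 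Summing the resulting geometric series,
\[
\mathcal H^2(N_k)\le 4\pi\sum_{i=1}^{k-1}r_i^2<4\pi\sum_{i=1}^\infty 2^{-2i-4}=\frac{\pi}{12}<1=\mathcal H^2(\mathbb T).
\]
Thus \(N_k\neq\mathbb T\), so some point of \(\mathbb T\) is at distance at least \(2r_k\) from \(\bigcup_{i=1}^{k-1}B_i\), and the maximizing property in the construction forces \(x_k\) to realize at least this distance.

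For \(\rm i)\), I would argue by contradiction. Suppose some \(y\in V\) satisfies \(\dist(y,U)=\delta>0\). Then \(y\) is itself a competitor at every step of the selection procedure, so \(\dist(y,\bigcup_{i=1}^k B_i)\ge\dist(y,U)\ge\delta\) for all \(k\), and the maximizing property of \(x_{k+1}\) propagates this bound: \(\dist(x_{k+1},\bigcup_{i=1}^k B_i)\ge\delta\). Consequently \(\sfd(x_{k+1},x_i)\ge\delta+r_i>\delta\) for every \(i\le k\), which exhibits \((x_k)_{k\in\N}\) as an infinite \(\delta\)-separated sequence in \((\mathbb T,\sfd)\). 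This contradicts the sequential compactness of the torus, so no such \(y\) exists and \(V\subseteq\overline U\).

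There is no real technical obstacle: both statements are light once the packing estimate is set up. The only delicacy is recognizing that the specific scale \(r_k=2^{-k-2}\) has been tuned exactly so that \(2r_k=r_{k-1}\), which both keeps \(r_i+2r_k\le 2r_i\) in the volume bound for \(\rm ii)\) and, downstream, forces the separation \(\sfd(x_k,x_i)>r_k\) that fuels the compactness contradiction in \(\rm i)\).
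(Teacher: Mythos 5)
Your proof is correct, and your argument for part \(\rm i)\) is essentially the same compactness-by-contradiction argument as in the paper. For part \(\rm ii)\), however, you take a genuinely different route. You bound the $2$-dimensional Hausdorff measure of the $2r_k$-neighborhood of $\bigcup_{i=1}^{k-1}B_i$: since $2r_k=r_{k-1}\leq r_i$ for $i\leq k-1$, each inflated ball $\bar B(x_i,r_i+2r_k)$ has measure at most $4\pi r_i^2$, and the geometric series keeps the total strictly below $\mathcal H^2(\mathbb T)=1$, so some point is $2r_k$-far from the union and the maximizing choice of $x_k$ inherits this bound. The paper instead projects onto the scalar function $\sfd(x_1,\cdot)$ and runs a one-dimensional pigeonhole: the intervals $[\sfd(x_1,x_i)-r_i,\,\sfd(x_1,x_i)+r_i]$ cover less than half of $[0,1]$, so a gap of length at least $\frac{1}{2k}\geq 4r_k$ survives, and a point on the corresponding sphere around $x_1$ does the job. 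Your volume argument is arguably more natural in that it does not single out $x_1$ and would transfer verbatim to any ambient Ahlfors-regular space of the right dimension, while the paper's projection argument is slightly more elementary (it avoids measuring neighborhoods and uses only $1$-Lipschitzness of $\sfd(x_1,\cdot)$). Both exploit the same arithmetic of the radii $r_i=2^{-i-2}$.
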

\begin{proof}
Suppose the claim i) is not true. Then there exist $x \in\X$ and $\varepsilon > 0$ so that $B(x,\varepsilon) \subseteq V$.
By the construction of the sequence $(x_i)_i$, this means in particular that for every $k>j$ we have
\[
\sfd(x_k,x_j) \ge \dist\left(x_k,\bigcup_{i=1}^{k-1} B_i\right) \ge \dist\left(x,\bigcup_{i=1}^{k-1} B_i\right) \ge \dist(x,U) \ge \varepsilon.
\]
This contradicts the fact that $(\X,\sfd)$ is compact. So, i) is proven.

Let us then prove ii). We argue by considering the distances $\sfd(x_1,x_i)$ for \(i=1,\ldots,k-1\).
The intervals $[\sfd(x_1,x_i)-r_i,\sfd(x_1,x_i)+r_i]$ with $i$ from $1$ to $k-1$ cover at most length
\[
\sum_{i=1}^{k-1}2r_i = \sum_{i=1}^{k-1}2^{-i-1}<\frac12
\]
of the interval $[0,1]$. Thus, the set
\[
[0,1] \setminus \bigcup_{i=1}^{k-1}[\sfd(x_1,x_i)-r_i,\sfd(x_1,x_i)+r_i]
\]
contains at least one interval $(a,b)$ of length $\frac{1}{2k} \ge 2^{-k} = 4 r_k$.
Thus, any given point $y \in\X$ with $\sfd(x_1,y) = \frac{a+b}{2}$ satisfies
\[
    \dist\left(y,\bigcup_{i=1}^{k-1} B_i\right) \ge 2r_k.
\]
Consequently, the chosen point $x_k$ also satisfies the desired inequality.
\end{proof}

Let us now start working towards showing that $V$ is an isoperimetric set.
For this, we define a function $\varphi\colon [0,\mm(U)] \to \mathbb R$ in the following way. Given any $t \in (0,\mm(U))$,
let $k_t \in \mathbb N$ be the index for which $\sum_{i=1}^{k_t}c\pi r_i^2 < t \le \sum_{i=1}^{k_t+1}c\pi r_i^2$. Now, set
\[
\varphi(t) \coloneqq \sum_{i=1}^{k_t}2c\pi r_i + 2c\pi\sqrt{\frac{t}{c\pi} - \sum_{i=1}^{k_t} r_i^2}.
\]
We extend $\varphi$ continuously to the endpoints $0$ and $\mm(U)$, that is $\varphi(0)\coloneqq 0$ and
\[
\varphi(\mm(U)) \coloneqq \sum_{i=1}^\infty 2c\pi r_i = \sum_{i=1}^\infty 2c\pi 2^{-i-2} = \frac{c\pi}{2} = \Per(U).
\]
\begin{lemma}\label{lma:phi}
 For every $t \in (0,\mm(U))$, we have that
 \[
 \varphi(\mm(U)) < \varphi(t) + \frac{1}{4\sqrt{\pi}}\sqrt{\mm(U)-t}.
 \]
\end{lemma}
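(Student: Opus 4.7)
The plan is to fix $k \coloneqq k_t$ and set $s \coloneqq t - \sum_{i=1}^k c\pi r_i^2 \in (0, c\pi r_{k+1}^2]$. Using the geometric-series identities $\sum_{i=k+1}^\infty r_i = 2 r_{k+1}$ and $\sum_{i=k+1}^\infty r_i^2 = \tfrac{4}{3} r_{k+1}^2$, the definition of $\varphi$ immediately yields
\[
\varphi(\mm(U)) - \varphi(t) = 4c\pi r_{k+1} - 2\sqrt{c\pi s}, \qquad \mm(U) - t = \tfrac{4}{3}c\pi r_{k+1}^2 - s.
\]
The key observation is a clean scaling. Introducing the rescaled variable $u \coloneqq \sqrt{s/(c\pi r_{k+1}^2)} \in (0,1]$, both sides of the desired inequality factor out a single $r_{k+1}$, eliminating the dependence on $k$: the two quantities above become $2c\pi r_{k+1}(2-u)$ and $c\pi r_{k+1}^2(\tfrac{4}{3}-u^2)$ respectively, and the claim reduces to
\[
2c\pi(2-u) < \tfrac{1}{4}\sqrt{c}\sqrt{\tfrac{4}{3} - u^2} \qquad \text{for all } u \in [0,1].
\]

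Both sides are strictly positive, so one may square. Substituting the design constant $c = 2^{-10}\pi^{-2}$, chosen precisely so that $64c\pi^2 = 1/16$, reduces the inequality to $(2-u)^2 < 16(\tfrac{4}{3}-u^2)$, equivalently $17u^2 - 4u + 4 < \tfrac{64}{3}$. The left-hand side is a convex quadratic on $[0,1]$ whose maximum is attained at $u = 1$ with value $13 < \tfrac{64}{3}$, so the inequality holds strictly throughout $[0,1]$, which gives the lemma.

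The only delicate point is bookkeeping the geometric-series constants correctly; once the $k$-independence is observed, the remainder is a one-variable quadratic estimate with ample slack. No difficulty is anticipated at the endpoints of the range of $t$, since $\varphi$ has been continuously extended to $0$ and $\mm(U)$, and the transition between consecutive values of $k_t$ corresponds exactly to the endpoints $u = 0$ and $u = 1$, at which the above inequality is strict.
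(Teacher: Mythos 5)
Your proof is correct, up to one arithmetic slip: $17u^2-4u+4$ at $u=1$ equals $17-4+4=17$, not $13$. Fortunately $17<\tfrac{64}{3}\approx 21.3$, so the conclusion survives (with less slack than you claimed). All the structural steps check out: the identities $\sum_{i\ge k+1}r_i=2r_{k+1}$ and $\sum_{i\ge k+1}r_i^2=\tfrac{4}{3}r_{k+1}^2$ are right, the expressions $\varphi(\mm(U))-\varphi(t)=2c\pi r_{k+1}(2-u)$ and $\mm(U)-t=c\pi r_{k+1}^2(\tfrac43-u^2)$ are right, the $r_{k+1}$ cancellation is legitimate, and $64c\pi^2=\tfrac1{16}$ with $c=2^{-10}\pi^{-2}$ is correct.

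Your route is genuinely different in spirit from the paper's. The paper's proof is cruder and shorter: it simply \emph{drops} the subtracted term $2c\pi\sqrt{t/(c\pi)-\sum_{i\le k_t}r_i^2}$ to get $\varphi(\mm(U))-\varphi(t)\le 8c\pi r_{k_t+2}$, pairs that with the one-line lower bound $\mm(U)-t>c\pi r_{k_t+2}^2$ (coming from $t\le\sum_{i\le k_t+1}c\pi r_i^2$), and then the choice of $c$ closes the gap. Your version keeps both sides exact, introduces the dimensionless variable $u$, and reduces everything to a $k$-independent one-variable quadratic inequality. That buys a cleaner view of exactly how much room the constant $c$ gives, at the cost of more bookkeeping; the paper's approach is minimal effort for the same conclusion. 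Either is fine to include -- just fix the $13\to 17$ typo if you keep yours.
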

\begin{proof}
 Let $k_t \in \mathbb N$ be as in the definition of $\varphi(t)$. By the definition of $\varphi$, we have
 \begin{align*}
 \varphi(\mm(U)) - \varphi(t) & = \sum_{i=k_t+1}^{\infty}2c\pi r_i - 2c\pi\sqrt{\frac{t}{c\pi} - \sum_{i=1}^{k_t} r_i^2}
 \le \sum_{i=k_t+1}^{\infty}2c\pi r_i \\
 & = \sum_{i=k_t+1}^{\infty}c\pi 2^{-i-1} 
 = c\pi2^{-k_t-1} = 8c\pi r_{k_t+2}.
 \end{align*}
 Since $\mm(U) - t > c\pi r_{k_t+2}^2$,
 we have
 \[
  \varphi(\mm(U)) - \varphi(t) \leq  8\pi cr_{k_t+2} <  8\sqrt{\pi c} \sqrt{\mm(U)-t}  = \frac{1}{4\sqrt{\pi}}\sqrt{\mm(U)-t},
 \]
 where the last equality comes from the choice of $c$.
\end{proof}

Let us now take a set $E \subseteq \X$ of finite perimeter to be a competitor for minimising the perimeter under the constraint
$\mm(E) = \mm(U)$. Our aim is to now prove that $\Per(U) \le \Per(E)$.
\begin{lemma}\label{lma:1}
It holds that $\Per(E\cap U) \ge \varphi(\mm(E\cap U))$.
\end{lemma}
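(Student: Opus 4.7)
The plan is to write $E\cap U=\bigsqcup_i(E\cap B_i)$, apply the Euclidean isoperimetric inequality in each ball, and then minimize the resulting sum of square roots. Set $v_i\coloneqq\mm(E\cap B_i)\in[0,c\pi r_i^2]$, so that $t\coloneqq\mm(E\cap U)=\sum_i v_i$.

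First I would show that the closures $\bar B_i$ are pairwise at positive distance: by Lemma~\ref{lma:density_separation}~ii), if $i<k$ and $y\in\bar B_i$, $z\in\bar B_k$, then $\sfd(y,z)\ge\sfd(x_k,y)-\sfd(x_k,z)\ge 2r_k-r_k=r_k>0$. Choose pairwise disjoint open neighborhoods $U_i\supseteq\bar B_i$; then $E\cap U$ coincides with $E\cap B_i$ on $U_i$, and $\Per(E\cap B_i;\cdot)$ is supported in $\bar B_i\subseteq U_i$. Hence $\Per(E\cap U)\ge\sum_i\Per(E\cap U;U_i)=\sum_i\Per(E\cap B_i;\X)$. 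Since $r_i<1/4$ lies below the injectivity radius of the flat torus, each $B_i$ is isometric to a Euclidean disk of radius $r_i$ with reference measure $c\,\mathcal H^2$, and the (rescaled) Euclidean isoperimetric inequality gives $\Per(E\cap B_i;\X)\ge 2\sqrt{\pi c\,v_i}$. Combining,
\[
\Per(E\cap U)\ \ge\ \sum_i 2\sqrt{\pi c\,v_i}.
\]

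What remains, and is the main obstacle, is the combinatorial inequality $\sum_i 2\sqrt{\pi c\,v_i}\ge\varphi(t)$ under the capacity constraints $v_i\le c\pi r_i^2$. Setting $b_i\coloneqq\sqrt{v_i/(c\pi)}\in[0,r_i]$ and $Q\coloneqq t/(c\pi)$, this is equivalent to
\[
\textstyle\sum_i b_i\ \ge\ \sum_{i=1}^{k_t}r_i\ +\ \sqrt{Q-\sum_{i=1}^{k_t}r_i^2}.
\]
I would prove this by induction on $k_t$. The base case $k_t=0$, namely $Q\le r_1^2$, is the elementary bound $\sum_i b_i\ge\sqrt{\sum_i b_i^2}$ for non-negative reals.

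For the inductive step the key device is a mass-concentration move preserving $\sum_i b_i^2$ while not increasing $\sum_i b_i$: given $b_1<r_1$ and some $j\ge 2$ with $b_j>0$, replace $(b_1,b_j)$ by $\bigl(\sqrt{b_1^2+b_j^2},\,0\bigr)$ when $b_1^2+b_j^2\le r_1^2$, and by $\bigl(r_1,\,\sqrt{b_1^2+b_j^2-r_1^2}\bigr)$ otherwise. The first case is non-increasing by the subadditivity $\sqrt{a+b}\le\sqrt a+\sqrt b$, while in the second case squaring reduces the non-increase to the inequality $(r_1^2-b_1^2)(r_1^2-b_j^2)\ge 0$, which holds because $b_j\le r_j<r_1$. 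Since $Q>r_1^2$ forces $\sum_{j\ge 2}b_j^2>r_1^2-b_1^2$, iterating these moves along $j=2,3,\dots$ produces in finitely many steps a feasible $(b_i')$ with $b_1'=r_1$ and $\sum_i b_i'\le\sum_i b_i$; applying the inductive hypothesis to $(b_j')_{j\ge 2}$, for which the greedy index drops to $k_t-1$, completes the induction and hence the proof.
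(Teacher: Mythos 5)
Your proof is correct in substance, but it takes a genuinely different route for the combinatorial optimization. Both arguments begin the same way: applying the Euclidean isoperimetric inequality inside each $B_i$ to reduce the claim to minimizing $\sum_i 2\sqrt{\pi c\,v_i}$ under the constraints $0\le v_i\le\mm(B_i)$ and $\sum_i v_i=\mm(E\cap U)$. From there the paper proceeds geometrically: it absorbs the tail mass into the first incomplete disk to reduce to finitely many disks, sorts them by size into $B_1,B_2,\dots$, and repeatedly invokes a shell-transfer fact (transferring an annulus from a smaller disk into a larger disk with room strictly lowers total perimeter) until it reaches the configuration $B_1\cup\dots\cup B_{j-1}\cup B(x_j,\bar s)$ that realises $\varphi$. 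You instead isolate and prove directly, by induction on the greedy index $k_t$, the abstract inequality
\[
\sum_i b_i\ \ge\ \sum_{i=1}^{k_t}r_i+\sqrt{Q-\sum_{i=1}^{k_t}r_i^2},\qquad 0\le b_i\le r_i,\ \sum_i b_i^2=Q,
\]
via your pairwise mass-concentration move. The algebra behind the move is right (the double-squaring to $(r_1^2-b_1^2)(r_1^2-b_j^2)\ge0$ works, as does a single squaring to $(r_1-b_1)(r_1-b_j)\ge0$), the fact that $Q>r_1^2$ when $k_t\ge1$ guarantees exhaustion in finitely many steps, and feasibility of $(b_i')$ is preserved. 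This route is shorter and avoids the finite-reduction and sorting overhead, at the price of phrasing the induction over arbitrary nonincreasing radius sequences (or, for the specific $r_i=2^{-i-2}$, observing that $(r_j)_{j\ge 2}=\tfrac12(r_i)_{i\ge 1}$ and using $1$-homogeneity of both sides).

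There is one flaw in the write-up of the reduction step: pairwise disjoint open neighbourhoods $U_i\supseteq\bar B_i$ for \emph{all} $i$ simultaneously do not exist. By Lemma \ref{lma:density_separation} i) the balls $B_j$ are dense in $V$ and hence accumulate on $\partial B_i\subseteq V$, so any open neighbourhood of $\bar B_i$ meets infinitely many $B_j$ with $j\ne i$ (exactly as the disjoint compacta $\{0\}$ and $\{1/n\}_{n\ge 2}$ in $\R$ admit no pairwise disjoint open neighbourhoods). The inequality $\Per_{\rm euc}(E\cap U)\ge\sum_i\Per_{\rm euc}(E\cap B_i)$ is nevertheless true; one way to obtain it is to first apply the separation argument to the finitely many balls $\bar B_1,\dots,\bar B_N$ together with submodularity to get $\sum_{i\le N}\Per_{\rm euc}(E\cap B_i)=\Per_{\rm euc}(E\cap\bigcup_{i\le N}B_i)\le\Per_{\rm euc}(E\cap U)+\Per_{\rm euc}(\bigcup_{i\le N}B_i)$, deduce $\sum_i\Per_{\rm euc}(E\cap B_i)<\infty$, and then use that ${\bf D}\1_{E\cap U}=\sum_i{\bf D}\1_{E\cap B_i}$ converges in total variation with the summands concentrated on the pairwise disjoint compacta $\bar B_i$. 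Note the paper's proof relies implicitly on the same inequality without spelling out this justification, so this is a gap to be aware of rather than a defect peculiar to your approach.
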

\begin{proof}
If \(\mm(E\cap U)=\mm(U)\), then \(E\cap U=U\) up to \(\mm\)-null sets and thus \(\Per(E\cap U)=\varphi(\mm(E\cap U))\).
Next, assume that \(\mm(E\cap U)<\mm(U)\). We define the open balls \(\tilde E_i\) for \(i\geq 1\) as
\[
\tilde E_i\coloneqq B\bigg(x_i,\sqrt{\frac{\mm(E\cap B_i)}{c\pi}}\bigg).
\]
Note that the set \(\tilde E\coloneqq\bigcup_{i=1}^\infty\tilde E_i\) satisfies \(\mm(\tilde E)=\mm(E\cap U)\) and \(\Per(\tilde E)\leq\Per(E\cap U)\) thanks to the
Euclidean isoperimetric inequality. Since \(\mm(\tilde E)<\mm(U)\), we have that \(\tilde E_i\neq B_i\) for some \(i\geq 1\). Denote by \(k\geq 1\) the smallest such
index \(i\). Since \(\sum_{i=k}^\infty\mm(\tilde E_i)<+\infty\), we can find \(m>k\) such that \(\sum_{i=m}^\infty\mm(\tilde E_i)<\mm(B_k)-\mm(\tilde E_k)\). Then there
exists a (unique) radius \(\bar r>0\) such that \(\tilde E_k\subseteq B(x_k,\bar r)\subseteq B_k\) and \(\mm(B(x_k,\bar r))=\mm(\tilde E_k)+\sum_{i=m}^\infty\mm(\tilde E_i)\),
and the Euclidean isoperimetric inequality ensures that \(\Per(B(x_k,\bar r))\leq\Per(\tilde E_k)+\sum_{i=m}^\infty\Per(\tilde E_i)\). Therefore, letting \(\tilde F_i\coloneqq\tilde E_i\)
for every \(i\in\{1,\ldots,m\}\setminus\{k\}\) and \(\tilde F_k\coloneqq B(x_k,\bar r)\), we have that \(\mm(\tilde F_1\cup\dots\cup\tilde F_m)=\mm(E\cap U)\) and
\(\Per(\tilde F_1\cup\dots\cup\tilde F_m)\leq\Per(E\cap U)\). For some bijective function \(\sigma\colon\{1,\ldots,m\}\to\{1,\ldots,m\}\), it holds that
\(\mm(\tilde F_{\sigma(1)})\geq\mm(\tilde F_{\sigma(2)})\geq\dots\geq\mm(\tilde F_{\sigma(m)})\). Since \(\mm(B_1)>\mm(B_2)>\dots>\mm(B_m)\), it can be readily
checked that \(\mm(\tilde F_{\sigma(i)})\leq\mm(B_i)\) for every \(i=1,\ldots,m\), so that accordingly
\[
F_i\coloneqq B\big(x_i,{\rm rad}(\tilde F_{\sigma(i)})\big)\subseteq B_i\quad\text{ for every }i=1,\ldots,m,
\]
where \({\rm rad}(\tilde F_{\sigma(i)})>0\) denotes the radius of the ball \(\tilde F_{\sigma(i)}\). Note that \(\mm(F_1\cup\dots\cup F_m)=\mm(E\cap U)\)
and \(\Per(F_1\cup\dots\cup F_m)\leq\Per(E\cap U)\). Letting \(\Per_{\rm euc}\) be the Euclidean perimeter in \(\R^2\), we have
\[
\Per_{\rm euc}(B(x,R+t))+\Per_{\rm euc}(B(y,r-s))<\Per_{\rm euc}(B(x,R))+\Per_{\rm euc}(B(y,r))
\]
whenever \(x,y\in\R^2\), \(0<t<s<r\leq R\) and \(\mathcal H^2(B(x,R+t)\setminus B(x,R))=\mathcal H^2(B(y,r)\setminus B(y,r-s))\).
By repeatedly making use of this fact, we deduce that for some \(j\in\{1,\ldots,m\}\) and \(\bar s<r_j\) one has
\[\begin{split}
\mm\big(B_1\cup\dots\cup B_{j-1}\cup B(x_k,\bar s)\big)&=\mm(F_1\cup\dots\cup F_m)=\mm(E\cap U),\\
\Per\big(B_1\cup\dots\cup B_{j-1}\cup B(x_k,\bar s)\big)&\leq\Per(F_1\cup\dots\cup F_m)\leq\Per(E\cap U).
\end{split}\]
In particular, \(\varphi(\mm(E\cap U))=\Per\big(B_1\cup\dots\cup B_{j-1}\cup B(x_k,\bar s)\big)\leq\Per(E\cap U)\), as desired.
\end{proof}

Let us introduce another abbreviation, $U_c \coloneqq \bigcup_{i=1}^\infty\overline{B}_i$.

\begin{lemma}\label{lma:2}
Suppose that $\Per(E) < \Per(U)$. Then
 we have 
 \[
 \Per(E\setminus U; \X \setminus U_c) \ge \frac1{4\pi}  \Per_{\rm euc}(E\setminus U; U_c) = \frac{1}{4\pi c}\Per(E\setminus U; U_c).
 \]
\end{lemma}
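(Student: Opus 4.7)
The plan is to reduce the statement to a purely Euclidean inequality and then prove it via a ball-by-ball application of the Euclidean isoperimetric inequality. Let \(F := E\setminus U\).

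First I would establish the second equality. Since \(F\cap B_i=\emptyset\) for every \(i\), the \(\mm\)-essential boundary of \(F\) intersected with \(U_c\) is concentrated on \(\bigcup_i S_i\). Near a sphere \(S_i\), any Lipschitz relaxation of \(\mathbf 1_F\) may be chosen so that its transition region lies on the \(U\)-side of \(S_i\), where \(\mm = c\,\mathcal H^2\); an optimisation of the variational definition of perimeter then gives \(\Per(F;S_i) = c\cdot\Per_{\rm euc}(F;S_i)\). Summing in \(i\) yields \(\Per(F;U_c) = c\cdot\Per_{\rm euc}(F;U_c)\), which is precisely the second equality. On the other hand \(\X\setminus U_c\subseteq V\) and \(\mm = \mathcal H^2\) on \(V\), so \(\Per(F;\X\setminus U_c) = \Per_{\rm euc}(F;\X\setminus U_c)\). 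Consequently the claim reduces to the purely Euclidean inequality
\[
\Per_{\rm euc}(F;\X\setminus U_c)\;\geq\;\frac{1}{4\pi}\sum_i\ell_i,\qquad \ell_i := \mathcal H^1(\partial^e F\cap S_i).
\]

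For this I would argue one ball at a time. For each \(i\) and each \(R\in(r_i,2r_i)\) form \(G_i^R := B_i\cup(F\cap B(x_i,R))\). Its Euclidean essential boundary decomposes as \((S_i\setminus A_i)\), the part of \(\partial^e F\) in the open annulus \(B(x_i,R)\setminus\bar B_i\), and a trace on \(S(x_i,R)\); note in particular \(\mathcal H^2(G_i^R)\geq \pi r_i^2\), so the Euclidean isoperimetric inequality gives \(\Per_{\rm euc}(G_i^R)\geq 2\pi r_i\), whence
\[
\Per_{\rm euc}(F; B(x_i,R)\setminus\bar B_i)+\mathcal H^1(F^{(1)}\cap S(x_i,R))\;\geq\;\ell_i.
\]
Integrating in \(R\in(r_i,2r_i)\) and converting the trace term into the volume \(\mathcal H^2(F\cap B(x_i,2r_i)\setminus\bar B_i)\) by the Fleming--Rishel coarea formula, and then controlling this volume by a second use of the Euclidean isoperimetric inequality applied to \(F\cap B(x_i,2r_i)\), produces a lower bound for \(\ell_i\) by the local Euclidean perimeter of \(F\) in an annulus around \(S_i\), with the constant \(\frac{1}{4\pi}\) coming from the Euclidean isoperimetric constant. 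By Lemma~\ref{lma:density_separation}(ii) the annuli \(B(x_i,2r_i)\setminus\bar B_i\) have essentially disjoint interiors after a small trimming, so summing in \(i\) gives the global inequality.

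The main obstacle is the degenerate case \(A_i = S_i\), where \(F\) completely surrounds \(B_i\) and the local isoperimetric argument on the slice \(G_i^R\) is vacuous. Here the hypothesis \(\Per(E)<\Per(U)\) enters decisively: together with the volume constraint \(\mm(E)=\mm(U)\) (which forces \(\mathcal H^2(F)\leq \mm(U)\)) and the summability \(\Per_{\rm euc}(U) = \sum 2\pi r_i <\infty\), it prevents \(F\) from surrounding too many balls at once, and the remaining contribution \(\sum_{A_i=S_i} 2\pi r_i\) is absorbed into the outer perimeter \(\Per_{\rm euc}(F;\X\setminus U_c)\) via the global structure of \(\partial^e F\).
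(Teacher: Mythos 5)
Your reduction to a purely Euclidean inequality is sound, and the identity $\Per(F;U_c)=c\,\Per_{\rm euc}(F;U_c)$ together with $\Per(F;\X\setminus U_c)=\Per_{\rm euc}(F;\X\setminus U_c)$ (with $F:=E\setminus U$) is indeed correct and is the content of the equality in the statement. However, the core of your argument has a genuine gap, and it is precisely the one you flag yourself: the case $A_i=S_i$ where $F$ essentially contains a full annulus around $B_i$. Your slice set $G_i^R$ then equals $B(x_i,R)$ up to null sets, the isoperimetric inequality for $G_i^R$ is an equality at the trivial level, and the local estimate gives nothing; yet the contribution $\ell_i=2\pi r_i$ to $\Per_{\rm euc}(F;U_c)$ is as large as it can be. The appeal to ``$\Per(E)<\Per(U)$ prevents $F$ from surrounding too many balls'' and ``absorbed into the outer perimeter via the global structure'' is not a proof; one still has to locate, quantitatively, outer boundary of $F$ that pays for these enclosed spheres, and there is no a priori reason it sits near the balls that $F$ encircles.

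There are two further technical obstructions to your scheme even outside the degenerate case. First, after the coarea step you need to dominate $\mathcal H^2\big(F\cap(B(x_i,2r_i)\setminus\bar B_i)\big)$ by a constant times $r_i\,\Per_{\rm euc}(F;\cdot)$, but the Euclidean isoperimetric inequality only gives a quadratic estimate in the perimeter, not a linear one; if $F$ nearly fills the annulus this term is of order $r_i^2$ while the local perimeter can be arbitrarily small (indeed zero in the degenerate case). Second, the annuli $B(x_i,2r_i)\setminus\bar B_i$ are not essentially disjoint: Lemma~\ref{lma:density_separation}(ii) only separates $x_k$ from the \emph{earlier} balls $B_1,\dots,B_{k-1}$, so an annulus around a large ball $B_k$ can contain many smaller balls $B_i$ with $i>k$, and the summation step would over-count.

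The paper resolves exactly these issues by working one \emph{indecomposable component} $F$ of $E\setminus U$ at a time (using the decomposition theorem), which together with $\Per(E)<\Per(U)=c\pi/2$ forces $\diam(F)<\tfrac14$. This localizes $F$ near a single ``anchor'' ball $B_k$ (the smallest index it touches), rules out the scenario where boundary on a sphere is paid for far away, and allows a clean one-sided use of Lemma~\ref{lma:density_separation}(ii). The key lower bound on $\Per(F;\X\setminus U_c)$ is then obtained not by an isoperimetric slice but by the $1$-Lipschitz radial projection $p$ towards $x_k$, which maps $\partial^e F\cap(B(x_k,\tfrac12)\setminus\partial B_k)$ \emph{onto} $\partial^e F\cap\partial B_k$ (because $F$ is connected, avoids $B_k$, and has small diameter), with a three-way case analysis to control the contributions of the remaining spheres $S_i$, $i>k$. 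That projection argument, together with indecomposability and the diameter bound, is the ingredient your proposal is missing; without some replacement for it, the degenerate case cannot be closed.
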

\begin{proof}
Up to an \(\mm\)-a.e.\ modification of the set \(E\), we can assume that \(\diam(E)\leq\frac{1}{2}\Per(E)\).
By the decomposition theorem for sets of finite perimeter into indecomposable sets \cite{AmbCasMasMor:01}
(see also \cite{BonPasRaj:20,Lah:23}), it suffices to prove that
\[
\Per(F;\X\setminus U_c)\geq\frac{1}{4\pi c}\Per(F;U_c)\quad\text{ for every indecomposable component }F\text{ of }E\setminus U.
\]
To this aim, fix an indecomposable component \(F\).
By assumption, $\Per(E) < \Per(U) = \frac{c\pi}{2}$. Thus,
\begin{equation}\label{eq:diambound}
{\rm diam}(F)\leq\diam(E\setminus U)\leq\diam(E)\leq \frac{\Per(E)}{2}<\frac{c\pi}{4}< \frac14.
\end{equation}
 Without loss of generality, we may assume that $\Per_{\rm euc}(F; U_c) > 0$.
 Let $k \in \mathbb N$ be the smallest integer so that 
 $\Per(F; \overline{B}_k)>0$. By \eqref{eq:diambound}, we have $\overline F\subseteq B(x_k,\frac12)$.
 We consider two cases and
  suppose first that $\Per(F; \overline{B}_i) = 0$ for all $i>k$.
 Let $p \colon B(x_k,\frac12)\setminus B_k \to \partial B_k$ be the radial projection towards the center $x_k$ of $B_k$. Since $p$ is $1$-Lipschitz, we have
  \begin{align*}
  \Per\left(F;  U_c\right) & = \Per(F; \overline{B}_k)
  = c\,\mathcal H^1(\partial^e F\cap \partial B_k) \\
  & \le c\,\mathcal H^1\big(p(\partial^e F\cap ( B(x_k,1/2) \setminus \partial B_k))\big) \\
  & \le c\,\mathcal H^1\big(\partial^e F\cap ( B(x_k,1/2) \setminus \partial B_k)\big)
  = c\,\Per\left(F; \X \setminus U_c\right).
  \end{align*}
Suppose then that $\Per(F; \overline{B}_i) > 0$ for some $i>k$. We let $m \in \mathbb N$ be the second smallest integer so that $\Per(F; \overline{B}_m)>0$.
We divide this second case into two subcases.
If $\Per(F; \overline{B}_k) < 4\pi cr_m$, then
\begin{equation}\label{eq:subcase1_1}
 \Per(F; U_c) = \Per(F; \overline{B}_k) + \sum_{i=m}^\infty\Per(F;\overline{B}_i)
 <  4\pi cr_m + \sum_{i=m}^\infty 2\pi c r_i = 8\pi cr_m.
\end{equation}
By Lemma \ref{lma:density_separation} ii), we have
\begin{equation}\label{eq:secondcase}
 \Per\left(F; \X \setminus U_c\right) \ge 2r_m.
\end{equation}
The combination of \eqref{eq:subcase1_1} and \eqref{eq:secondcase} then gives the desired bound
\[
 \Per\left(F; \X \setminus U_c\right) \ge 2r_m = \frac{8\pi cr_m}{4\pi c} > \frac1{4\pi c}\Per(F; U_c).
\]
Finally, assume the last subcase $\Per(F; \overline{B}_k) \ge 4\pi cr_m$. Then
\begin{equation}\label{eq:subcase2_1}
\begin{split}
\Per(F; U_c) &= \Per(F; \overline{B}_k) + \sum_{i=m}^\infty \Per(F; \overline{B}_i)
\le \Per(F; \overline{B}_k) + \sum_{i=m}^\infty 2\pi c r_i\\
&= \Per(F; \overline{B}_k) + 4\pi cr_m
\le 2\,\Per(F; \overline{B}_k).
\end{split}
\end{equation}
 By considering again the radial projection $p\colon B(x_k,\frac{1}{2})\setminus B_k\to\partial B_k$ towards $x_k$, we first note that
\[
 \mathcal H^1\left(p\left(\bigcup_{i=m}^\infty B(x_k,1/2)\cap\overline{B}_i\right)\right)\leq\sum_{i=m}^\infty 2r_i\le
 \sum_{i=m}^\infty\pi r_i = 2\pi r_m \le \frac{1}{2c}\Per(F; \overline{B}_k).
\]
Thus, by using this and \eqref{eq:subcase2_1}, we get
\begin{align*}
\Per\left(F; \X \setminus U_c\right) & 
= \mathcal H^1(\partial^e F \cap (\X \setminus U_c))
\ge \mathcal H^1\big(p(\partial^e F \cap (\X \setminus U_c))\big)\\
&\ge \mathcal H^1(\partial^e F \cap \partial B_k) - \mathcal H^1\left(p\left(\bigcup_{i=m}^\infty B(x_k,1/2)\cap\overline{B}_i\right)\right)\\
&\ge \frac1c\Per(F; \overline{B}_k) - \frac{1}{2c}\Per(F; \overline{B}_k)
= \frac1{2c}\Per(F; \overline{B}_k)\\
&\geq \frac1{4c}\Per(F; U_c) >  \frac1{4\pi c}\Per(F; U_c).
\end{align*}
The proof is complete.
\end{proof}
\begin{lemma}\label{lma:3}
 We have $\Per(E\setminus U; \X \setminus U_c) \ge \frac{2\sqrt\pi}{4\pi+1}\sqrt{\mm(E\setminus U)}$.
\end{lemma}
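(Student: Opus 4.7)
The plan is to combine Lemma \ref{lma:2} with the sharp Euclidean isoperimetric inequality. As in the proof of Lemma \ref{lma:2}, I would work under the assumption \(\Per(E) < \Per(U)\) (the only case needed in the subsequent proof that \(V\) is isoperimetric), so that after modifying \(E\) on an \(\mm\)-null set, the set \(F \coloneqq E \setminus U\) has diameter strictly less than \(1/4\). Since \(F \subseteq V\) and \(\rho \equiv 1\) on \(V\), we also have \(\mathcal H^2(F) = \mm(F)\).

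First I would translate the bound of Lemma \ref{lma:2} from \(\mm\)-perimeters to Euclidean perimeters. On the open set \(\X \setminus U_c\) the density is \(1\), hence \(\Per(F; \X \setminus U_c) = \Per_{\rm euc}(F; \X \setminus U_c)\). Because \(F\) has zero \(\mm\)-density at every point of the open set \(U\), its essential boundary misses \(U\), so \(\partial^e F \cap U_c \subseteq \partial U\); applying the identity \(\Per(F; \overline B_k) = c\,\mathcal H^1(\partial^e F \cap \partial B_k)\) established inside the proof of Lemma \ref{lma:2} to each \(k\) and summing yields \(\Per(F; U_c) = c\,\Per_{\rm euc}(F; U_c)\). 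With these identifications, the conclusion of Lemma \ref{lma:2} becomes the Euclidean inequality \(\Per_{\rm euc}(F; U_c) \leq 4\pi\,\Per_{\rm euc}(F; \X \setminus U_c)\), so
\[
\Per_{\rm euc}(F) = \Per_{\rm euc}(F; \X \setminus U_c) + \Per_{\rm euc}(F; U_c) \leq (4\pi + 1)\,\Per(F; \X \setminus U_c).
\]

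To finish, I would apply the sharp planar isoperimetric inequality to \(F\). Since \(\diam(F) < 1/4\) sits well below the injectivity radius \(1/2\) of the flat torus, \(F\) lifts isometrically to a Borel set \(\tilde F \subseteq \R^2\) of the same Euclidean area and perimeter, and the Euclidean isoperimetric inequality yields
\[
\Per_{\rm euc}(F) = \Per_{\rm euc}(\tilde F) \geq 2\sqrt{\pi\,\mathcal H^2(\tilde F)} = 2\sqrt{\pi\,\mm(F)}.
\]
Combining this with the previous display gives precisely the claimed bound \(\Per(E \setminus U; \X \setminus U_c) \geq \tfrac{2\sqrt\pi}{4\pi+1}\sqrt{\mm(E \setminus U)}\).

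The step I expect to require the most care is the identification \(\Per(F; U_c) = c\,\Per_{\rm euc}(F; U_c)\), i.e.\ making precise that the weighted perimeter along the interface \(\partial U\) is governed by the smaller of the two values of \(\rho\); intuitively, Lipschitz approximants of \(\1_F\) can be made to concentrate their slope on the \(B_k\)-side of \(\partial B_k\), where the density is \(c\). This was already used implicitly in Lemma \ref{lma:2} but deserves a short clean justification. Once granted, the rest is a direct assembly of Lemma \ref{lma:2}, the planar isoperimetric inequality in \(\R^2\), and the trivial equality \(\mm = \mathcal H^2\) on \(V\).
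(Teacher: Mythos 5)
Your proposal is correct and follows essentially the same route as the paper: combine the Euclidean isoperimetric inequality on \(E\setminus U\) with the decomposition \(\Per_{\rm euc}(E\setminus U)=\Per_{\rm euc}(E\setminus U;U_c)+\Per_{\rm euc}(E\setminus U;\X\setminus U_c)\) and Lemma \ref{lma:2}. The only additional content you supply beyond the paper's one-display proof is (a) making explicit the implicit hypothesis \(\Per(E)<\Per(U)\) inherited from Lemma \ref{lma:2}, which gives the diameter bound needed to legitimately invoke the planar isoperimetric inequality on the torus, and (b) a short justification of the interface identity \(\Per(F;U_c)=c\,\Per_{\rm euc}(F;U_c)\) — the latter is actually already asserted as part of the statement of Lemma \ref{lma:2}, so re-deriving it is mildly redundant, but your heuristic (Lipschitz approximants concentrating their slope on the low-density side, combined with the a priori lower bound \(\mm\geq c\,\mathcal H^2\)) is exactly the right way to make that identity rigorous, and it is indeed a point the paper glosses over.
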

\begin{proof}
By Lemma \ref{lma:2} and the Euclidean isoperimetric inequality, we have
\[\begin{split}
2\sqrt\pi\sqrt{\mm(E\setminus U)}&\leq\Per_{\rm euc}(E\setminus U)=
\Per_{\rm euc}(E\setminus U;U_c)+\Per_{\rm euc}(E\setminus U;\X\setminus U_c)\\
&\leq(4\pi+1)\Per(E\setminus U;\X\setminus U_c),
\end{split}\]
whence the claim follows.
\end{proof}

We are now ready to complete the proof of Theorem \ref{thm:example}.
\begin{proof}[Proof of Theorem \ref{thm:example}]
Since $(\X,\sfd,\mm)$ differs from the standard flat torus by only the weight $\rho$ that is bounded away from zero and bounded from above, $(\X,\sfd,\mm)$ is complete, geodesic,
it satisfies a $(1,1)$-Poincar\'e inequality and it is Ahlfors $2$-regular. The space $(\X,\sfd,\mm)$ is also easily seen to be infinitesimally Hilbertian; this follows for instance
from \cite{LucPasRaj21}. As a consequence of Lemma \ref{lma:density_separation} i), we have that $V$ has no representative with a topological interior point. Finally, for any isoperimetric competitor $E$ for $U$, we can estimate
\begin{align*}
\Per(E) & = \Per(E\cap U) + \Per(E\setminus U) - 2\Per(E\setminus U; U_c)\\
 & \ge \Per(E\cap U) + \Per(E\setminus U; \X\setminus U_c) - \Per(E\setminus U; U_c)\\
  & \ge \varphi(\mm(E\cap U)) + \left(1-4\pi c\right)\Per(E\setminus U; \X \setminus U_c)\\
  & \ge \varphi(\mm(E\cap U)) + 2\sqrt\pi\frac{1-4\pi c}{4\pi+1}\sqrt{\mm(E\setminus U)}\\
  & \ge \varphi(\mm(E\cap U)) + \frac{1}{4\sqrt{\pi}}\sqrt{\mm(E\setminus U)}\\
  & > \varphi(\mm(U)) = \Per(U), 
\end{align*}
where the third inequality follows from Lemmata \ref{lma:1} and \ref{lma:2}, the fourth from Lemma \ref{lma:3}, the final one from Lemma \ref{lma:phi}.
This shows that \(U\) is an isoperimetric set. As \(\Per(F)=\Per(\X\setminus F)\) for any \(F\subseteq\X\) of finite perimeter, and \(\mm(V)<+\infty\),
we conclude that \(V\) is an isoperimetric set.
\end{proof}
\section{Proof of Theorems \ref{thm:main_2} and \ref{thm:MCP}}\label{s:main_2}
Given a metric space \((\X,\sfd)\) and \(x\in\X\), we recall that by \(\sfd_x\) we denote the \(1\)-Lipschitz function
\[
\sfd_x(y)\coloneqq\sfd(x,y)\quad\text{ for every }y\in\X.
\]
Before passing to the verification of Theorem \ref{thm:main_2}, we need to prove a couple of auxiliary results.
\begin{lemma}\label{lem:tech_results_BV}
Let \((\X,\sfd,\mm)\) be a metric measure space. Then the following properties hold:
\begin{itemize}
\item[\(\rm i)\)] Given any \(x\in\X\), we have that \(\mm(S(x,r))=0\) for all but countably many \(r>0\).
\item[\(\rm ii)\)] Given any \(E\subseteq\X\) of finite perimeter, \(\Per(E;S(x,r))=0\) for all but countably many \(r>0\).
\item[\(\rm iii)\)] Given any \(f\in L^1(\mm)^+\cap L^\infty(\mm)\) of bounded variation, there exists \((f_n)_n\subseteq\LIP_{bs}(\X)\)
such that \(0\leq f_n\leq\|f\|_{L^\infty(\mm)}\) for every \(n\in\N\), \(f_n\to f\) in \(L^1(\mm)\) and
\(\int\lip(f_n)\,\d\mm\to|{\bf D}f|(\X)\).
\item[\(\rm iv)\)] If \(f\in L^1_{loc}(\mm)\) is a function of bounded variation, and a sequence \((f_n)_n\subseteq\LIP_{loc}(\X)\)
satisfies \(f_n\to f\) in \(L^1_{loc}(\mm)\) and \(\int\lip(f_n)\,\d\mm\to|{\bf D}f|(\X)\), then we have that \(\lip(f_n)\mm\rightharpoonup|{\bf D}f|\).
\item[\(\rm v)\)] Let \(f_n\colon\X\to\R\), for \(n\in\N\), and \(f\colon\X\to\R\) be given Borel functions
such that \(\int|f_n-f|\,\d\mm\to 0\) as \(n\to\infty\). Fix any \(x\in\X\). Then there exists a subsequence \((n_k)_k\) such that
\[
\lim_{k\to\infty}\int|f_{n_k}-f|\,\d\Per(B(x,r);\cdot)=0\quad\text{ for a.e.\ }r>0.
\]
\end{itemize}
\end{lemma}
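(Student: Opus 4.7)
My plan is to address the five claims in turn, with v) requiring a non-obvious construction and the other four being fairly standard.

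Parts i) and ii) follow from one monotonicity principle. The map $r\mapsto\mm(B(x,r))$ is nondecreasing and left-continuous, so it has at most countably many discontinuities; at every point $r$ of right-continuity one has $\mm(\bar B(x,r))\leq\inf_{s>r}\mm(B(x,s))=\mm(B(x,r))$, because $\bar B(x,r)\subseteq B(x,s)$ for every $s>r$, forcing $\mm(S(x,r))=0$. The same argument applied verbatim to the nondecreasing, left-continuous function $r\mapsto\Per(E;B(x,r))$ (using that $\Per(E;\cdot)$ is a finite Borel measure) yields ii).

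For iii), I select a sequence $(g_n)\subseteq\LIP_{bs}(\X)$ via \eqref{eq:def_BV_improved} with $g_n\to f$ in $L^1(\mm)$ and $\int\lip(g_n)\,\d\mm\to|{\bf D}f|(\X)$, and then truncate $f_n\coloneqq\max(0,\min(g_n,\|f\|_{L^\infty(\mm)}))$. Iterating \eqref{eq:calc_rul_lip_4} with the $1$-Lipschitz truncations gives $\lip(f_n)\leq\lip(g_n)$; the pointwise bound $|f_n-f|\leq|g_n-f|$, valid because $0\leq f\leq\|f\|_{L^\infty(\mm)}$, preserves the $L^1(\mm)$ convergence; and lower semicontinuity of the total variation, combined with $\int\lip(f_n)\,\d\mm\leq\int\lip(g_n)\,\d\mm$, squeezes the intermediate limit to $|{\bf D}f|(\X)$. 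Claim iv) is the standard consequence of the same lower semicontinuity: plugging the restrictions $f_n|_\Omega\in\LIP_{loc}(\Omega)$ into Definition \ref{def:BV} yields $|{\bf D}f|(\Omega)\leq\liminf_n\int_\Omega\lip(f_n)\,\d\mm$ for every open $\Omega\subseteq\X$, which together with the equal-total-mass hypothesis $\int\lip(f_n)\,\d\mm\to|{\bf D}f|(\X)<+\infty$ gives $\lip(f_n)\mm\rightharpoonup|{\bf D}f|$ by the standard Portmanteau-type implication for positive finite Borel measures on a metric space.

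The real obstacle is v), since the distance function $\sfd_x$ is not in $L^1(\mm)$ in general and so cannot be plugged directly into the coarea formula \eqref{eq:coarea}. I bypass this by localising: for each $R>0$, set
\[
\psi_R\coloneqq\min(\sfd_x,R)-R,
\]
which is $1$-Lipschitz with $\text{spt}(\psi_R)\subseteq\bar B(x,R)$, so $\psi_R\in\LIP_{bs}(\X)$. A direct case split shows $\{\psi_R<t\}=B(x,R+t)$ for every $t\in(-R,0)$, while outside this range the sublevel set is either empty or all of $\X$, both contributing nothing to the coarea integral. Applying \eqref{eq:coarea} to $\psi_R$ with integrand $g=|f_n-f|$ and bounding $|{\bf D}\psi_R|\leq\lip(\psi_R)\,\mm\leq\mm$ via \eqref{eq:ineq_Lip_BV} gives, after the change of variables $s=R+t$,
\[
\int_0^R\!\!\!\int|f_n-f|\,\d\Per(B(x,s);\cdot)\,\d s\leq\int|f_n-f|\,\d\mm\xrightarrow{n\to\infty}0.
\]
A diagonal extraction over $R\in\N$ then produces a single subsequence $(n_k)$ along which the inner integral vanishes for a.e.\ $s>0$, as required.
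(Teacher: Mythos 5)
Your proof is correct, and for parts iii)--v) it follows essentially the same route as the paper: truncating a $\LIP_{bs}$ approximating sequence supplied by \eqref{eq:def_BV_improved}; combining open-set lower semicontinuity of the total variation with the total-mass convergence and the Portmanteau theorem; and feeding a compactly supported $1$-Lipschitz modification of $\sfd_x$ into the coarea formula \eqref{eq:coarea}, bounding $|{\bf D}\psi_R|\leq\mm$ via \eqref{eq:ineq_Lip_BV} and diagonalising over $R$. Your choice $\psi_R=\min(\sfd_x,R)-R$ is a perfectly good cutoff -- if anything a touch cleaner than the one in the paper, whose description of $\psi$ contains a slip (it is meant to agree with the identity, not with $1$, on $[0,R]$). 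For parts i) and ii) the paper invokes a shorter argument: the spheres $\{S(x,r)\}_{r>0}$ are pairwise disjoint, and in a $\sigma$-finite measure space a pairwise disjoint family has at most countably many members of positive measure; this is applied to $\mm$ and to the finite measure $\Per(E;\cdot)$. Your route via the nondecreasing, left-continuous maps $r\mapsto\mm(B(x,r))$ and $r\mapsto\Per(E;B(x,r))$, whose (at most countably many) discontinuities are exactly the radii $r$ with $\mm(S(x,r))>0$, resp.\ $\Per(E;S(x,r))>0$, reaches the same conclusion and is equally valid, just slightly more elaborate.
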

\begin{proof}
\ \\
{\bf i), ii).} They follow from a well-known fact in measure theory: if \((\Y,\Sigma,\mu)\) is a \(\sigma\)-finite measure space
and \(\{E_i\}_{i\in I}\subseteq\Sigma\) are pairwise disjoint sets, then \(\mu(E_i)=0\) for all but countably many \(i\in I\).\\
{\bf iii).} By \eqref{eq:def_BV_improved}, there is \((\tilde f_n)_n\subseteq\LIP_{bs}(\X)\) such that \(\tilde f_n\to f\) in \(L^1(\mm)\) and \(\int\lip(\tilde f_n)\,\d\mm\to|{\bf D}f|(\X)\).
Define \(f_n\coloneqq(\tilde f_n\wedge\|f\|_{L^\infty(\mm)})\vee 0\in\LIP_{bs}(\X)\) for every \(n\in\N\). Since \(f_n\to f\) in \(L^1(\mm)\)
as \(n\to\infty\) and \(\lip(f_n)\leq\lip(\tilde f_n)\) for every \(n\in\N\), we can conclude that \(\int\lip(f_n)\,\d\mm\to|{\bf D}f|(\X)\).\\
{\bf iv).} For any \(\Omega\subseteq\X\) open, we have \(|{\bf D}f|(\Omega)\leq\liminf_n\int_\Omega\lip(f_n)\,\d\mm=\liminf_n(\lip(f_n)\mm)(\Omega)\).
We also assumed \((\lip(f_n)\mm)(\X)\to|{\bf D}f|(\X)\), thus the Portmanteau theorem yields \(\lip(f_n)\mm\rightharpoonup|{\bf D}f|\).\\
{\bf v).} Fix any \(R\in\N\). Take a compactly-supported \(1\)-Lipschitz function \(\psi\colon\R\to[0,1]\) with \(\psi=1\) on \([0,R]\).
Then \(\psi\circ\sfd_x\in\LIP_{bs}(\X)\) and \(\psi\circ\sfd_x=\sfd_x\) on \(B(x,R)\), thus the coarea formula \eqref{eq:coarea} gives
\[\begin{split}
\int_0^R\!\!\!\int|f_n-f|\,\d\Per(B(x,r);\cdot)\,\d r&=\int_0^R\!\!\!\int\1_{B(x,R)}|f_n-f|\,\d\Per(\{\psi\circ\sfd_x<r\};\cdot)\,\d r\\
&\leq\int|f_n-f|\,\d|{\bf D}(\psi\circ\sfd_x)|\overset{\eqref{eq:ineq_Lip_BV}}\leq\int|f_n-f|\lip(\psi\circ\sfd_x)\,\d\mm\\
&\leq\int|f_n-f|\,\d\mm\to 0\quad\text{ as }n\to\infty.
\end{split}\]
Therefore, we can extract a subsequence \((n_k)_k\) such that \(\lim_k\int|f_{n_k}-f|\,\d\Per(B(x,r);\cdot)=0\) for a.e.\ \(r\in(0,R)\).
Since \(R\in\N\) is arbitrary, the claim follows by a diagonalisation argument.
\end{proof}
\begin{proposition}\label{prop:Minkowski_per}
Let \((\X,\sfd,\mm)\) be a metric measure space with \((\X,\sfd)\) proper. Fix any \(x\in\X\). Given any \(k\in\N\) and \(r>0\),
we define the auxiliary \(k\)-Lipschitz function \(\phi_k^r\colon\R\to[0,1]\) as
\[
\phi_k^r(t)\coloneqq\left\{\begin{array}{lll}
1\\
1-k(t-r)\\
0
\end{array}\quad\begin{array}{lll}
\text{ for every }t\in(-\infty,r],\\
\text{ for every }t\in(r,r+k^{-1}),\\
\text{ for every }t\in[r+k^{-1},+\infty).
\end{array}\right.
\]
Moreover, let us define \(g_k^r\coloneqq\phi_k^r\circ\sfd_x\in\LIP_{bs}(\X)\). Then for a.e.\ \(r>0\) it holds that
\(g_k^r\to\1_{B(x,r)}\) in \(L^1(\mm)\) as \(k\to\infty\) and that the set \(\{\lip(g_k^r)\mm\}_{k\in\N}\) is weakly relatively compact.
\end{proposition}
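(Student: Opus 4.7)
The plan is to pin down a full-measure set of ``good'' radii \(r>0\) on which both assertions follow from elementary one-variable analysis of the monotone function \(\mu(s)\coloneqq\mm(B(x,s))\) together with the properness of \((\X,\sfd)\). Concretely, I would call \(r>0\) good if (i) \(\mm(S(x,r))=0\) and (ii) \(\mu\) is differentiable at \(r\). Condition (i) holds for all but countably many \(r\) by Lemma \ref{lem:tech_results_BV} i), while (ii) holds for a.e.\ \(r\) by Lebesgue's differentiation theorem for monotone functions, so good \(r\)'s have full measure.

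For the \(L^1\)-convergence assertion, I would observe that the pointwise sandwich \(\1_{B(x,r)}\le g_k^r\le\1_{B(x,r+k^{-1})}\) yields
\[
\int|g_k^r-\1_{B(x,r)}|\,\d\mm\le\mm(B(x,r+k^{-1}))-\mm(B(x,r)).
\]
As \(k\to\infty\) the balls \(B(x,r+k^{-1})\) decrease to \(\bar B(x,r)\), so the right-hand side tends to \(\mm(S(x,r))\), which is zero by (i).

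For the weak relative compactness of \(\{\lip(g_k^r)\mm\}_{k\in\N}\) I would use Prokhorov's theorem, which requires tightness and a uniform bound on total mass. The chain rule \eqref{eq:calc_rul_lip_4}, combined with \(\lip(\phi_k^r)\le k\,\1_{[r,r+k^{-1}]}\) and \(\lip(\sfd_x)\le 1\), gives
\[
\lip(g_k^r)\le k\,\1_{\bar B(x,r+k^{-1})\setminus B(x,r)}.
\]
This bound immediately yields tightness, since all measures are then supported in the common compact set \(\bar B(x,r+1)\) (using that \((\X,\sfd)\) is proper, so closed balls are compact). It also reduces the uniform mass bound to showing that \(k\bigl(\mm(\bar B(x,r+k^{-1}))-\mm(B(x,r))\bigr)\) stays bounded in \(k\).

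The main technical point, where I expect the delicate step, is precisely this uniform mass bound: differentiability gives information about \(\mu\), whereas the mass of \(\lip(g_k^r)\mm\) naturally involves the right-continuous envelope \(\nu(s)\coloneqq\mm(\bar B(x,s))\). To bridge the gap I would use the monotonicity-based inequality \(\nu(s)\le\mu(s+\varepsilon)\) for every \(\varepsilon>0\); together with the bound \(\mu(r+h)-\mu(r)\le Ch\) for all small \(h\) coming from (ii), and sending \(\varepsilon\downarrow 0\), this yields \(\nu(r+k^{-1})-\mu(r)\le Ck^{-1}\) for all sufficiently large \(k\), hence the desired uniform bound \(\lip(g_k^r)\mm(\X)\le C\) (with the finitely many small \(k\) handled individually, as \(\nu(r+1)<\infty\) by local finiteness of \(\mm\)). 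Prokhorov's theorem then completes the argument.
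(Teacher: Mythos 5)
Your proposal is correct and follows essentially the same route as the paper: you restrict to the full-measure set of radii where $\mm(S(x,r))=0$ and $\mu(s)=\mm(B(x,s))$ is differentiable, use the sandwich $\1_{B(x,r)}\le g_k^r\le\1_{B(x,r+k^{-1})}$ for the $L^1$-convergence, bound $\lip(g_k^r)$ via the chain rule by $k\,\1_{\bar B(x,r+k^{-1})\setminus B(x,r)}$, control the resulting mass by the derivative of $\mu$, and invoke Prokhorov together with properness. Your explicit handling of the gap between $\mm(\bar B(x,\cdot))$ and $\mm(B(x,\cdot))$ via $\nu(s)\le\mu(s+\varepsilon)$ is a small refinement of a point the paper treats more briskly, but the substance is the same.
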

\begin{proof}
Define \(m\colon(0,+\infty)\to\R\) as \(m(r)\coloneqq\mm(B(x,r))\) for every \(r>0\). For a.e.\ \(r>0\) we have that
\[\begin{split}
\int|g_k^r-\1_{B(x,r)}|\,\d\mm&=\mm(S(x,r))+\int_{B(x,r+k^{-1})\setminus\bar B(x,r)}g_k^r\,\d\mm\\
&\leq\mm\big(B(x,r+k^{-1})\setminus B(x,r)\big)\to\mm(S(x,r))=0\quad\text{ as }k\to\infty
\end{split}\]
by Lemma \ref{lem:tech_results_BV} i). Moreover, as \(m\) is non-decreasing (and thus a.e.\ differentiable), we have that
\[\begin{split}
\limsup_{k\to\infty}\int\lip(g_k^r)\,\d\mm&\overset{\eqref{eq:calc_rul_lip_4}}\leq
\limsup_{k\to\infty}\int(\lip(\phi^r_k)\circ\sfd_x)\lip(\sfd_x)\,\d\mm
\leq\lim_{k\to\infty}\int\lip(\phi^r_k)\circ\sfd_x\,\d\mm\\
&=\lim_{k\to\infty}k\,\mm\big(\bar B(x,r+k^{-1})\setminus B(x,r)\big)=m'(r)<+\infty\quad\text{ for a.e.\ }r>0,
\end{split}\]
whence it follows that \(\sup_{k\in\N}\|\lip(g_k^r)\|_{L^1(\mm)}<+\infty\). Since \(\lip(g_k^r)\mm\) is concentrated on
\(\bar B(x,r+1)\) for all \(k\in\N\), Prokhorov's theorem implies that \(\{\lip(g_k^r)\mm\}_{k\in\N}\) is weakly relatively compact.
\end{proof}

We are now in a position to prove Theorem \ref{thm:main_2}.
\begin{proof}[Proof of Theorem \ref{thm:main_2}]
Using Lemma \ref{lem:tech_results_BV} iii), we can find a sequence \((f_n)_n\subseteq\LIP_{bs}(\X)\) with \(0\leq f_n\leq 1\)
such that \(f_n\to\1_E\) in \(L^1(\mm)\) and \(\int\lip(f_n)\,\d\mm\to\Per(E)\), whence it follows that
\(\lip(f_n)\mm\rightharpoonup\Per(E;\cdot)\) by Lemma \ref{lem:tech_results_BV} iv). Taking Lemma \ref{lem:tech_results_BV} i),
ii), v) and Proposition \ref{prop:Minkowski_per} into account, we can find a negligible set \(N\subseteq(0,R)\) and a
(non-relabelled) subsequence of \((f_n)_n\) such that the following conditions are satisfied for every \(r\in(0,R)\setminus N\):
\begin{itemize}
\item[a)] \(\mm(S(x,r))=\mm(S(x,r+k^{-1}))=0\) for every \(k\in\N\).
\item[b)] \(\Per(E;S(x,r))=0\), thus in particular \(\lim_n\int_{B(x,r)}\lip(f_n)\,\d\mm=\Per(E;B(x,r))\).
\item[c)] \(f_n\to\1_E\) in \(L^1(\Per(B(x,r);\cdot))\) as \(n\to\infty\).
\item[d)] \(g_k^r\to\1_{B(x,r)}\) in \(L^1(\mm)\) as \(k\to\infty\), where \(g^r_k\) is defined as in Proposition \ref{prop:Minkowski_per}.
\item[e)] \(\{\lip(g^r_k)\mm\}_{k\in\N}\) is weakly relatively compact.
\end{itemize}
Now, fix \(r\in(0,R)\setminus N\). Given any \(n,k\in\N\) with \(k>\frac{1}{R-r}\), Propositions \ref{prop:chain_Dg(nabla_g)} and \ref{prop:Leibniz_Dg(nabla_g)}
ensure that
\[\begin{split}
\int_{B(x,R)}{\rm D}^+(f_n g_k^r)(\nabla\sfd_x^2)\,\d\mm&=2\int_{B(x,R)}\sfd_x\,{\rm D}^+(f_n g_k^r)(\nabla\sfd_x)\,\d\mm\\
&\leq 2\int_{B(x,R)}\sfd_x\,f_n\,{\rm D}^+g_k^r(\nabla\sfd_x)\,\d\mm+2\int_{B(x,R)}\sfd_x\,g_k^r\,{\rm D}^+f_n(\nabla\sfd_x)\,\d\mm.
\end{split}\]
As \(\lip(g^r_k)=k\) on \(B(x,r+k^{-1})\setminus\bar B(x,r)\), by using a) and Lemma \ref{lem:tech_results_Df(nabla_g)} iv), v), vii) we obtain that
\[\begin{split}
\int_{B(x,R)}\sfd_x\,f_n\,{\rm D}^+g_k^r(\nabla\sfd_x)\,\d\mm&=\int_{B(x,r+k^{-1})\setminus\bar B(x,r)}\sfd_x\,f_n\,{\rm D}^+(-k\sfd_x+(1+kr))(\nabla\sfd_x)\,\d\mm\\
&=-k\int_{B(x,r+k^{-1})\setminus\bar B(x,r)}\sfd_x\,f_n\,{\rm D}^-\sfd_x(\nabla\sfd_x)\,\d\mm\\
&=-k\int_{B(x,r+k^{-1})\setminus\bar B(x,r)}\sfd_x\,f_n\,\lip(\sfd_x)^2\,\d\mm\\
&=-\int_{B(x,r+k^{-1})\setminus\bar B(x,r)}\sfd_x\,f_n\,\lip(g^r_k)\,\d\mm\leq-r\int f_n\,\lip(g^r_k)\,\d\mm.
\end{split}\]
Moreover, by applying Lemma \ref{lem:tech_results_Df(nabla_g)} vi) we can estimate
\[\begin{split}
\int_{B(x,R)}\sfd_x\,g_k^r\,{\rm D}^+f_n(\nabla\sfd_x)\,\d\mm&\leq
\bigg(r+\frac{1}{k}\bigg)\int_{B(x,r+k^{-1})}|{\rm D}^+f_n(\nabla\sfd_x)|\,\d\mm\\
&\leq\bigg(r+\frac{1}{k}\bigg)\int_{B(x,r+k^{-1})}\lip(f_n)\,\d\mm.
\end{split}\]
Since we assumed that \(\Delta\sfd_x^2\leq C\mm\) on \(B(x,R)\),
the above estimates yield
\begin{equation}\label{eq:deform_prop_aux}\begin{split}
2r\int f_n\,\lip(g_k^r)\,\d\mm-2\bigg(r+\frac{1}{k}\bigg)\int_{B(x,r+k^{-1})}\lip(f_n)\,\d\mm
&\leq-\int_{B(x,R)}{\rm D}^+(f_n g_k^r)(\nabla\sfd_x^2)\,\d\mm\\
&\leq C\int_{B(x,R)}f_n g_k^r\,\d\mm.
\end{split}\end{equation}
Since \(\{\lip(g_k^r)\mm\}_{k\in\N}\) is weakly relatively compact by e), we can extract a subsequence \((k_j)_j\)
(depending on \(r\)) and a finite Borel measure \(\nu_r\geq 0\) on \(\X\) such that \(\lip(g_{k_j}^r)\mm\rightharpoonup\nu_r\)
as \(j\to\infty\). By virtue of the fact that \(g_{k_j}^r\to\1_{B(x,r)}\) in \(L^1(\mm)\) as \(j\to\infty\) by d), we deduce that
\(\Per(B(x,r);\cdot)\leq\nu_r\). Writing \eqref{eq:deform_prop_aux} for \(k=k_j\) and letting \(j\to\infty\), we get that
\begin{equation}\label{eq:deform_prop_aux2}\begin{split}
2r\int f_n\,\d\Per(B(x,r);\cdot)-2r\int_{B(x,r)}\lip(f_n)\,\d\mm
&\leq 2r\int f_n\,\d\nu_r-2r\int_{B(x,r)}\lip(f_n)\,\d\mm\\
&\leq C\int_{B(x,r)}f_n\,\d\mm.
\end{split}\end{equation}
Thanks to c) and b), by letting \(n\to\infty\) in \eqref{eq:deform_prop_aux2} we conclude that
\[
2r\Per(B(x,r);E)-2r\Per(E;B(x,r))\leq C\mm(E\cap B(x,r)),
\]
thus obtaining \eqref{eq:def_prop_1}.

Next, let us assume in addition that \((\X,\sfd,\mm)\) is a PI space. We deduce from
Lemma \ref{lem:tech_results_BV} ii) that \(\Per\big(E^{(1)};\partial^e(\X\setminus B(x,r))\big)\leq\Per(E^{(1)};S(x,r))=0\) for a.e.\ \(r\in(0,R)\),
thus it follows from the fact that \((\X\setminus B(x,r))^{(1)}\subseteq\X\setminus B(x,r)\) and from \cite[Proposition 2.6]{Ant:Pas:Poz:Vio:23} that
\[
\Per(E\setminus B(x,r))\leq\Per(E;\X\setminus B(x,r))+\Per(B(x,r);E^{(1)})\quad\text{ for a.e.\ }r\in(0,R).
\]
Consequently, by applying \eqref{eq:def_prop_1} (with \(E\) replaced by \(E^{(1)}\)) we obtain that
\begin{equation}\label{eq:main_2_aux}\begin{split}
\Per(E\setminus B(x,r))&\leq\Per(E;\X\setminus B(x,r))+C\frac{\mm(E^{(1)}\cap B(x,r))}{2r}+\Per(E^{(1)};B(x,r))\\
&\leq\Per(E;\X\setminus B(x,r))+C\frac{\mm(E\cap B(x,r))}{2r}+\Per(E;B(x,r))\\
&=C\frac{\mm(E\cap B(x,r))}{2r}+\Per(E)\quad\text{ for a.e.\ }r\in(0,R).
\end{split}\end{equation}
Finally, fix an arbitrary radius \(r\in(0,R)\). Applying \eqref{eq:main_2_aux}, we can find an increasing sequence of radii
\((r_n)_{n\in\N}\subseteq(0,R)\) such that \(r_n\nearrow r\) and \(\Per(E\setminus B(x,r_n))\leq C(2r_n)^{-1}\mm(E\cap B(x,r_n))+\Per(E)\)
for every \(n\in\N\). Given that \(\mm\big(E\cap(B(x,r)\setminus B(x,r_n))\big)=|\mm(E\cap B(x,r))-\mm(E\cap B(x,r_n))|\to 0\) as \(n\to\infty\),
the lower semicontinuity property of perimeters ensures that
\[\begin{split}
\Per(E\setminus B(x,r))&\leq\liminf_{n\to\infty}\Per(E\setminus B(x,r_n))\leq\lim_{n\to\infty}C\frac{\mm(E\cap B(x,r_n))}{2r_n}+\Per(E)\\
&=C\frac{\mm(E\cap B(x,r))}{2r}+\Per(E),
\end{split}\]
thus proving the validity of \eqref{eq:def_prop_2}.
\end{proof}

Finally, let us conclude by proving Theorem \ref{thm:MCP}.
\begin{proof}[Proof of Theorem \ref{thm:MCP}]
Let \((\X,\sfd,\mm)\) be a given essentially non-branching \({\sf MCP}(K,N)\) space, where \(K\in\R\) and \(N\in(1,\infty)\).
Then \((\X,\sfd)\) is a length space, the measure \(\mm\) is uniformly locally doubling \cite{Stu:06} (thus, \((\X,\sfd)\) is proper)
and \((\X,\sfd,\mm)\) supports a weak local \((1,1)\)-Poincar\'{e} inequality \cite{vonRen:08} (cf.\ with \cite[Remark 2.6]{Cav:Mon:20}).
Given \(R>0\) and \(x\in\X\), we deduce from \cite[Corollary 4.17]{Cav:Mon:20} and Remark \ref{rmk:consist_Lapl} that \(\Delta\sfd_x^2\leq C(K,N,R)\mm\)
on \(B(x,R)\), where the constant \(0<C(K,N,R)<+\infty\) is given by
\[
C(K,N,R)\coloneqq\sup_{0<\theta<R}2\bigg(1+(N-1)\frac{r\,s'_{K/(N-1)}(r)}{s_{K/(N-1)}(r)}\bigg),
\]
the function \(s_{K/(N-1)}\) being defined as in \cite[Eq.\ (2-10)]{Cav:Mon:20}. Therefore, by applying Theorem \ref{thm:main_2}
we obtain that \(\Per(E\setminus B(x,r))\leq C(K,N,R)(2r)^{-1}\mm(E\cap B(x,r))+\Per(E)\) whenever \(E\subseteq\X\) is a set of finite perimeter,
\(x\in\X\) and \(r\in(0,R)\). One can easily deduce that \((\X,\sfd,\mm)\) has the deformation property stated in \cite[Definition 3.3]{Ant:Pas:Poz:Vio:23}.
The rest of the claim then follows from results of \cite{Ant:Pas:Poz:Vio:23}. Fix an isoperimetric set \(E\subseteq\X\). The bullet points
i) and ii) follow from \cite[Theorem 1.1]{Ant:Pas:Poz:Vio:23}, while iii) is a consequence of \cite[Theorem 3.9]{Ant:Pas:Poz:Vio:23}. Also,
iv) and v) are observed in \cite[Remark 3.10]{Ant:Pas:Poz:Vio:23}, while vi) follows from \cite[Corollary 1.3]{Ant:Pas:Poz:Vio:23}.
Therefore, the statement is achieved.
\end{proof}
%
%
%
%
%
%
\small

\end{document}